\newcommand{\numberset}{\mathbb}
\newcommand{\R}{\numberset{R}}
\newcommand{\N}{\numberset{N}}
\newcommand{\E}{\mathbb{E}}
\newcommand{\Prob}{\mathbb{P}}
\newcommand{\C}{\bold C}
\newtheorem{thm}{Theorem}[section]
\newtheorem{cor}[thm]{Corollary}
\newtheorem{lem}[thm]{Lemma}
\newtheorem{prop}[thm]{Proposition}
\newtheorem{defin}[thm]{Definition}
\newtheorem{assu}[thm]{Assumption}
\newtheorem{rem}[thm]{Remark}
\begin{document}

\title{Least singular value and condition number \\ of a square random matrix with i.i.d.\ rows}

\author{M. Gregoratti$^{1}$, D. Maran$^{1}$
\\
\\
$^1$ \textsl{Politecnico di Milano, Dipartimento di Matematica}, \\
\textsl{Piazza Leonardo da Vinci 32, I-20133 Milano, Italy}}

\maketitle

\begin{abstract}
We consider a square random matrix made by i.i.d.\ rows with any distribution and prove that, for any given dimension, the probability for the least singular value to be in $[0,\epsilon)$ is at least of order $\epsilon$. This allows us to generalize a result about the expectation of the condition number that was proved in the case of centered gaussian i.i.d.\ entries: such an expectation is always infinite. Moreover, we get some additional results for some well-known random matrix ensembles, in particular for the isotropic log-concave case, which is proved to have the best behaving in terms of the well conditioning.
\bigskip

\emph{Keywords}: least singular value, condition number, random matrix
\end{abstract}

\section{Introduction}

The first important results about the least singular value $\sigma_\text{min}(\widetilde X)$ and the condition number $\kappa(\widetilde X)$ of a square $n\times n$ random matrix $\widetilde X$ were obtained in 1988. Edelman in \cite{Ede} computed the exact distribution of $\sigma_\text{min}(\widetilde X)$ for a matrix of i.i.d.\ complex standard gaussian entries and the limiting distribution in the i.i.d.\ real standard gaussian case. Kostlan in \cite{Kos} 
proved that $\E[\kappa(\widetilde X)]=+\infty$ whenever the entries are i.i.d.\ real centered gaussian, regardless of the matrix dimension. Two years later Szarek in \cite{Sza} found lower and upper bounds both for $\E[\log\kappa(\widetilde X)]$ and for $\E[\kappa(\widetilde X)^\alpha]$, $0<\alpha<1$, which hold every time the entries are i.i.d.\ standard gaussian, and which depend only on the matrix dimension $n$, the choice of the $p$-norm on $\R^n$, and the choice of $\alpha$.

After twenty years Tao and Vu discovered that, always in the case of i.i.d.\ random entries, the limiting distribution of $\sigma_\text{min}(\widetilde X)$ is universal \cite{Tao}: if the entries moments are bounded, then the cumulative distribution function of the least singular value\footnote{actually, the cumulative distribution function of $n\sigma_\text{min}^2$, since $\sigma_\text{min}\to 0$ as the dimension of the matrix grows} converges (uniformly) to the one of the gaussian case when the dimension $n$ of the matrix grows.

In recent years it were studied some more general classes of random matrices: on one side some works were focused on removing the assumption of gaussian entries and substituting it with a bound on their tail distribution \cite{Rude} or even just with the fact that they admit variance \cite{Reb}; on the other side, some works relaxed the assumption that the entries of the same row are independent and focused on matrices with i.i.d. rows of specific distributions \cite{Adam} \cite{Tik}.

These works where focused on finding upper bounds for the cumulative distribution function of the least singular value $\sigma_\text{min}(\widetilde X)$ as well as lower bounds for the one of the condition number $\kappa(\widetilde X)$ (since the least singular value is smaller the closer the matrix is to singularity while the condition number follows the opposite path, their estimations are usually linked and the present paper does not make an exception).

The articles \cite{Adam},  \cite{Reb}, \cite{Rude} and \cite{Tik} found, under different assumptions, estimations for the asymptotic case which bound the cumulative distrubution up to an error of exponential order in the matrix dimension $n$. Morevorer, two of them,  \cite{Adam} and \cite{Tik} managed to prove even bounds of the type
$$\Prob\Big(\sigma_\text{min}(\widetilde X)\le \epsilon\Big)<f(n,\epsilon)$$
which hold for fixed values of $n$ and $\epsilon>0$.

Then a natural aim could be to find how further these estimations can arrive. Note that, at least for i.i.d.\ real gaussian entries, the above mentioned papers by Edelman, Kostlan and Szarek entail that we cannot find bounds of the type
$$\Prob\Big(\sigma_\text{min}(\widetilde X)\le \epsilon\Big)<f(n)\epsilon^{1-\delta}$$
for any $\delta>0$.
As we are going to show, this is not a characteristic of the gaussian case.
%, nor of the many families of special distributions which the gaussian belong to.

Indeed, we can prove a lower bound for the cumulative distribution function of the least singular value $\sigma_\text{min}(\widetilde X)$ of a square random matrix, of every fixed dimension $n$, in the general setting of i.i.d.\ rows. We do not ask any additional assumption on the rows distribution, that may have unbounded moments or even not admit neither a continuous density function nor a discrete one.

Under these only assumptions we can prove our main results:
\begin{itemize}
\item $\displaystyle\liminf_{\epsilon\to0^+}\frac{\Prob\Big(\sigma_\text{min}(\widetilde X)<\epsilon\Big)}{\epsilon}>0$,
\item $\displaystyle\E \biggl[\frac{1}{\sigma_\text{min}(\widetilde X)}\biggr]=\E\Big[\|\widetilde X^{-1}\|\Big]=+\infty$,
\item $\displaystyle\E\Big[\kappa(\widetilde X)\Big]=\E\Big[\|\widetilde X\|\|\widetilde X^{-1}\|\Big]=+\infty.$
\end{itemize}
The first item generalizes the behaviour of the least singular value of i.i.d.\ real gaussian entries.
The last item generalizes the result by Kostlan on the average condition number. Of course, $\|\cdot\|$ can be any matrix norm and the results are still valid for matrices with i.i.d.\ columns instead of rows.

Moreover, in the cases of a random matrix described by \cite{Adam,Tik}, we get additional results by combining our lower bound with their upper bounds. We prove that the probability of $\sigma_\text{min}(\widetilde X)\in[0,\epsilon)$ grows linearly with $\epsilon$ in a neighbourhood of $0$, as well as we prove an interesting property of the moments of the condition number showing that the isotropic log-concave distribution has the best behaving in terms of the well conditioning.

Of course, our results are trivial if $\Prob\Big(\sigma_\text{min}(\widetilde X)=0\Big)>0$. In particular, our results are trivial in the discrete case, which was vastly studied by \cite{Bou} \cite{Tato} \cite{TaoVu} \cite{TatoVu}. Indeed, if $\widetilde X$ is a square random matrix with i.i.d.\ rows $X_1,\ldots,X_n$ assuming some value $x$ with positive probability, then
$$\Prob\Big(\sigma_\text{min}(\widetilde X)=0\Big)\ge \Prob\Big(X_1=X_2\Big)\ge\Prob\Big(X_1=x\Big)^2>0.$$

It is also easy to see that relaxing our only hypothesis, for example taking shifted random matrices (matrices which are made by the sum of a random matrix with independent entries and a deterministic one), our results may not hold true. Indeed, if
$$\widetilde X=3I+\begin{pmatrix} M_{11} \ M_{12}\\  M_{21} \ M_{22}\end{pmatrix}$$
where $M_{11}, M_{12},  M_{21}, M_{22}$ are i.i.d.\ and such that $M_{11}\in (-1,1)$ a.s., then $\sigma_\text{min}(\widetilde X)>1$ a.s..
This type of matrices has been studied in \cite{San} (gaussian case), \cite{TaoVuv} and \cite{Tik}.

One can also verify that our results may not hold true in the case of an inhomogeneous random matrix (where the entries are independent but not identically distributed), where \cite{Liv} recently discovered some upper bounds for the cumulative distribution function of $\sigma_\text{min}(\widetilde X)$ which generalize the ones of \cite{Rude}.

Therefore, it remains as new open question to find the minimal hypothesis on the random matrix $\widetilde X$ such that our results hold.

As it will be clear in section \ref{nuple}, our tecniques are ineffective in the case of rectangular random matrices, where some estimations for the distribution of the least singular values have been found in \cite{Lit}, \cite{LitRiv} and \cite{Rud}.

\section{Notations}

Given a vector $x\in \R^n$ and a square matrix $A\in \R^{n\times n}$, we introduce the usual vector and operator $p$-norms,  $p\in \N \cup \{+ \infty\}$,
$$\|x\|_p=\sqrt[p]{\sum_{i=1}^n |x(i)|^p},\qquad \forall p\in \N,\qquad\qquad\|x\|_\infty=\max_{i=1,\ldots,n}|x(i)|,$$
$$\|A\|_p=\max_{\|x\|_p=1}\|Ax\|_p.$$
In particular, if we denote the rows of the matrix $A$ by $A_1,\ldots,A_n$, we also have
$$\|A\|_\infty=\max_{i=1,\ldots,n} \|A_i\|_1.$$
Moreover, if we denote by $\sigma_\text{min}(A)$ and $\sigma_\text{max}(A)$ the smallest and the largest \emph{singular value} of $A$ respectively, that is the square root of the smallest and the largest eigenvalue of $A^TA$, then we have
$$\sigma_\text{max}(A)=\|A\|_2=\max_{\|x\|_2=1}\|Ax\|_2,\qquad\qquad\sigma_\text{min}(A)=\min_{\|x\|_2=1}\|Ax\|_2$$
and, if $A$ is invertible,
$$\sigma_\text{min}(A)=\frac{1}{\|A^{-1}\|_2}.$$
Finally, the \emph{condition number} of $A$ in matrix norm $\|\cdot\|$ on $\R^{n\times n}$ is
\[
\mathcal \kappa(A)=
\begin{cases}
\|A\|\,\|A^{-1}\|, & \text{if A is invertible,} \\
+\infty,  & \text{otherwise.}
\end{cases}
\]
The condition number depends on the choice of the matrix norm, but different condition numbers are always pairwise equivalent thanks to the pairwise equivalence of the norms.

\section{The Moulds}\label{moulds}

Our results are based on the introduction of moulds, whose definition is motivated by the following lemma about the expectation of a positive random variable.

\begin{lem}\label{attesa} Let $W$ be a positive random variable such that
$$\liminf_{t\to +\infty} \Big(1-\Prob (W\le t)\Big)t=q>0.$$
Then $\E[W]=+\infty$.
\end{lem}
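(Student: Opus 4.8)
The plan is to recall the standard formula expressing the expectation of a positive random variable as the integral of its survival function, and then to show that the hypothesis forces that integral to diverge at $+\infty$. Concretely, write $\E[W]=\int_0^{+\infty}\bigl(1-\Prob(W\le t)\bigr)\,\rmd t$, which holds for any positive random variable (with the value $+\infty$ allowed). The assumption $\liminf_{t\to+\infty}\bigl(1-\Prob(W\le t)\bigr)t=q>0$ means that for every $q'$ with $0<q'<q$ there is a threshold $t_0$ such that $1-\Prob(W\le t)\ge q'/t$ for all $t\ge t_0$. Hence
$$\E[W]\ge\int_{t_0}^{+\infty}\bigl(1-\Prob(W\le t)\bigr)\,\rmd t\ge\int_{t_0}^{+\infty}\frac{q'}{t}\,\rmd t=+\infty,$$
since the harmonic integral $\int_{t_0}^{+\infty}t^{-1}\,\rmd t$ diverges. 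This is the whole argument.

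The only genuine point to be careful about is the passage from the $\liminf$ hypothesis to the pointwise lower bound $1-\Prob(W\le t)\ge q'/t$ for all large $t$: by definition of $\liminf$, the quantity $(1-\Prob(W\le t))t$ is eventually bounded below by any $q'<q$, which is exactly what is needed. One should also state the tail formula for $\E[W]$ carefully — it follows from Tonelli's theorem applied to $\E[W]=\E\bigl[\int_0^{+\infty}\mathds{1}_{\{t<W\}}\,\rmd t\bigr]$ — but this is completely standard and no integrability is needed since everything is nonnegative.

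I do not anticipate a real obstacle here; the statement is essentially a one-line consequence of the integral representation of the mean together with the divergence of $\int^{+\infty}\rmd t/t$. If anything, the subtlety is purely bookkeeping: making sure the chosen $q'$ is strictly positive (possible precisely because $q>0$) and that the threshold $t_0$ can be taken finite (guaranteed by the definition of $\liminf$). The role of this lemma in the paper is clear: later one will show that $W=1/\sigma_\text{min}(\widetilde X)$, or $W=\kappa(\widetilde X)$, satisfies the tail hypothesis as a consequence of the linear-in-$\epsilon$ lower bound on $\Prob(\sigma_\text{min}(\widetilde X)<\epsilon)$, and then this lemma immediately yields the claimed infinite expectations.
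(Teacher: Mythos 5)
Your proposal is correct and follows essentially the same route as the paper: obtain from the $\liminf$ hypothesis a pointwise lower bound of the form $c/t$ on the survival function for all large $t$ (the paper uses $c=q/2$, you use any $q'<q$), and then invoke the tail formula $\E[W]=\int_0^{+\infty}\Prob(W>t)\,\rmd t$ together with the divergence of the harmonic integral. No gaps; nothing further to add.
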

\begin{proof}
By assumption, there exists $T>0$ such that
$$\Big(1-\Prob (W\le t)\Big)\ge \frac{q}{2t},\qquad \forall t>T,$$
otherwise we could find a sequence $t_k\to \infty$ such that $\displaystyle\lim_{k\to +\infty} \Big(1-\Prob (W\le t_k)\Big)t_k< q/2<q$. Then
$$\E[W]=\int_0^\infty \Prob(W>t)dt\ge \int_T^\infty \Prob(W>t)dt\ \ge \int_T^\infty \frac{q}{2t} dt=\infty.$$
\end{proof}

Motivated by this lemma, we introduce our main definition.

\begin{defin} Let $X$ be a random vector in $\R^n$. For every integer number $m\geq0$, the \emph{$m$-dimensional mould} of $X$, denoted by $\C_m (X)$, is the set of all $x\in\R^n$ such that
$$\liminf_{\epsilon \to 0^+} \frac{\Prob\Big(\|X-x\|_2<\epsilon\Big)}{\epsilon ^m}>0.$$
\end{defin}

Of course, every mould $\C_m(X)$ only depends on the distribution of the random vector and, moreover, it does not change if we replace the euclidean norm in the definition with any other one. Then we can immediately prove the following important feature of the moulds.

\begin{thm}\label{CalP}
Let $X$ be a random vector in $\R^n$ and let $x$ be a point in $\C_m(X)$, $m\geq1$. Then
$$\E\biggl[\frac{1}{\|X-x\|^m}\biggr]=+\infty.$$
\end{thm}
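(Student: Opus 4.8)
The plan is to reduce Theorem \ref{CalP} to Lemma \ref{attesa} by setting $W = 1/\|X-x\|^m$. Since $x\in\C_m(X)$ with $m\geq1$, we have $\liminf_{\epsilon\to0^+}\Prob(\|X-x\|_2<\epsilon)/\epsilon^m = q > 0$ for some $q\in(0,+\infty]$ (and the value of $q$ does not depend on the chosen norm, so we may work with an arbitrary fixed norm $\|\cdot\|$, adjusting $q$ by the norm-equivalence constants if necessary). First I would observe that $W$ is a positive random variable (we may ignore the event $\{X=x\}$, which either has probability zero or makes the statement trivial since then $\E[W]\ge\E[W\mathbf 1_{\{X=x\}}]\cdot(+\infty)$-type reasoning applies; more cleanly, $q>0$ already forces $\Prob(X=x)$ to contribute, but in any case $\Prob(W>0)=1$ up to a null set).

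Next I would translate the tail of $W$ into a small-ball probability of $\|X-x\|$. For $t>0$,
\[
\Prob(W > t) = \Prob\bigl(1/\|X-x\|^m > t\bigr) = \Prob\bigl(\|X-x\| < t^{-1/m}\bigr).
\]
Therefore
\[
\bigl(1-\Prob(W\le t)\bigr)\,t = \Prob\bigl(\|X-x\| < t^{-1/m}\bigr)\cdot t = \frac{\Prob\bigl(\|X-x\| < \epsilon\bigr)}{\epsilon^m}\Big|_{\epsilon = t^{-1/m}}.
\]
As $t\to+\infty$ we have $\epsilon = t^{-1/m}\to0^+$, and since $t\mapsto t^{-1/m}$ is a continuous decreasing bijection from $(0,+\infty)$ onto itself, the $\liminf$ over $t\to+\infty$ of the left-hand side equals the $\liminf$ over $\epsilon\to0^+$ of $\Prob(\|X-x\|<\epsilon)/\epsilon^m$, which is $q>0$ by the definition of the mould. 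Hence $\liminf_{t\to+\infty}\bigl(1-\Prob(W\le t)\bigr)t = q > 0$, and Lemma \ref{attesa} gives $\E[W] = \E[1/\|X-x\|^m] = +\infty$, as claimed.

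I do not expect any serious obstacle here; the result is essentially a change of variables packaging the mould hypothesis into the exact form Lemma \ref{attesa} requires. The only minor points to handle with care are: (i) if $q=+\infty$ the argument still works verbatim since we only need $q>0$; (ii) the passage between the euclidean norm in the definition of the mould and a general norm $\|\cdot\|$ in the statement, which is handled by the remark already made in the text that the moulds are norm-independent; and (iii) making sure $W$ is genuinely real-valued (finite a.s.), i.e.\ that $\Prob(X=x)=0$ — but if instead $\Prob(X=x)>0$ the conclusion $\E[1/\|X-x\|^m]=+\infty$ is immediate, so this case can be dispatched in one line before invoking the lemma.
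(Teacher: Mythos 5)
Your proposal is correct and follows essentially the same route as the paper: the change of variables $\epsilon=t^{-1/m}$ converts the mould condition into the tail condition $\liminf_{t\to+\infty}\bigl(1-\Prob(W\le t)\bigr)t>0$ for $W=1/\|X-x\|^m$, and Lemma \ref{attesa} concludes. The extra remarks on $q=+\infty$, norm equivalence, and the case $\Prob(X=x)>0$ are harmless refinements of the same argument.
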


\begin{proof}
By definition of mould, we know that
$$
\liminf_{\epsilon\to 0^+} \frac{\Prob(\|X-x\|<\epsilon)}{\epsilon^m}>0.
$$
Then, after the change of variable $\epsilon=\sqrt[m]{1/t}$, we have
$$
\liminf_{t\to +\infty}  \Prob \biggl(\frac{1}{\|X-x\|^m}>t\biggl)t>0 \implies \liminf_{t\to +\infty}  \biggl(1-\Prob \biggl(\frac{1}{\|X-x\|^m}\le t \biggr)\biggl)t>0.
$$
Finally, thanks to previous lemma \ref{attesa}, this is enough to get
$$
\E\Big[\|X-x\|^{-m}\Big]=+\infty.
$$
\end{proof}

In order to usefully apply such a theorem, we need to explore some other features of the moulds. First of all, moulds are a sequence of sets that obviously grows with the index:

\begin{equation}\label{inscatola}\C_\ell(X)\subseteq \C_m(X), \qquad\forall \ell\leq m.\end{equation}

Moreover, in order to compute the liminf in the definition of moulds, it is enough to compute the liminf along the sequence $\epsilon_k=1/k$.

\begin{prop}\label{calconsucc}
Let $X$ be a random vector in $\R^n$, $x$ be a point in $\R^n$, $m\geq0$. Then
$$
\liminf_{\epsilon\to 0^+} \frac{\Prob\Big(\|X-x\|_2<\epsilon\Big)}{\epsilon^m}=\liminf_{k\to\infty} \frac{\Prob\Big(\|X-x\|_2<1/k\Big)}{(1/k)^m}.
$$
\end{prop}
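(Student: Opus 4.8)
The plan is to reduce the limit along the continuous parameter $\epsilon\to0^+$ to the limit along the discrete sequence $1/k$ by a monotonicity sandwiching argument. The key observation is that the numerator $\Prob(\|X-x\|_2<\epsilon)$ is nondecreasing in $\epsilon$, while the denominator $\epsilon^m$ is nondecreasing as well (for $m\ge0$); so if we squeeze an arbitrary $\epsilon$ between two consecutive reciprocals $1/(k+1)\le\epsilon<1/k$, we can bound the ratio at $\epsilon$ from below by a ratio whose numerator is evaluated at the smaller endpoint and whose denominator is evaluated at the larger endpoint, and vice versa for the upper bound.

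Concretely, I would first note that the inequality "$\le$" between the two sides is immediate, since the right-hand liminf is a liminf over a subsequence of the values appearing on the left. For the reverse inequality, fix $\epsilon>0$ and let $k=k(\epsilon)\ge1$ be the unique integer with $\tfrac1{k+1}\le\epsilon<\tfrac1k$. Using monotonicity of $t\mapsto\Prob(\|X-x\|_2<t)$ and of $t\mapsto t^m$, one gets
$$
\frac{\Prob\big(\|X-x\|_2<\epsilon\big)}{\epsilon^m}\ \ge\ \frac{\Prob\big(\|X-x\|_2<1/(k+1)\big)}{(1/k)^m}\ =\ \left(\frac{k+1}{k}\right)^m\frac{\Prob\big(\|X-x\|_2<1/(k+1)\big)}{(1/(k+1))^m}.
$$
As $\epsilon\to0^+$ we have $k(\epsilon)\to\infty$, and the factor $\big((k+1)/k\big)^m\to1$. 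Taking $\liminf_{\epsilon\to0^+}$ on the left, and using that along any sequence $\epsilon_j\to0^+$ the associated $k(\epsilon_j)$ runs through (a subsequence tending to infinity within) the integers, the right-hand side has liminf at least $\liminf_{k\to\infty}\Prob(\|X-x\|_2<1/k)/(1/k)^m$. This yields "$\ge$" and hence equality.

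I do not expect a genuine obstacle here; the only point requiring a little care is making the passage "as $\epsilon\to0^+$, $k(\epsilon)\to\infty$" rigorous at the level of liminfs — one should phrase it as: for every sequence $\epsilon_j\downarrow0$ the integers $k_j:=k(\epsilon_j)$ tend to $+\infty$, so $\liminf_j \Prob(\|X-x\|_2<1/(k_j+1))/(1/(k_j+1))^m\ge \liminf_{k\to\infty}\Prob(\|X-x\|_2<1/k)/(1/k)^m$, and combine with $\big((k_j+1)/k_j\big)^m\to1$. The degenerate case $m=0$ is trivial (both sides equal $\liminf_{t\to0^+}\Prob(\|X-x\|_2<t)=\Prob(X=x)$ by monotone convergence, and likewise along $1/k$), so one may assume $m\ge1$ if convenient. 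One could equally well run the symmetric upper-bound computation with the roles of the endpoints swapped to obtain "$\le$" a second time, but that is redundant given the subsequence remark.
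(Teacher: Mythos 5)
Your proof is correct and takes essentially the same route as the paper: squeeze $\epsilon$ between consecutive reciprocals $1/(k+1)$ and $1/k$, use monotonicity of the numerator and of $t\mapsto t^m$, and absorb a correction factor tending to $1$. One small algebraic slip: in your display the factor should be $\left(\tfrac{k}{k+1}\right)^m$ rather than $\left(\tfrac{k+1}{k}\right)^m$, since $(1/k)^{-m}=(1/(k+1))^{-m}\,\bigl(\tfrac{k}{k+1}\bigr)^m$; this does not affect the conclusion because the corrected factor still tends to $1$.
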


\begin{proof}
For a given $m$, if we set
$$
f_m(x)=\liminf_{k\to\infty} \frac{\Prob\Big(\|X-x\|_2<1/k\Big)}{(1/k)^m},
$$
then it is enough to show that the liminf computed along any another sequence $\epsilon_j\downarrow0$ has to be bigger or equal to $f_m(x)$.

Thus, given $\epsilon_j\downarrow0$, let us consider the integer part of $1/\epsilon_j$,
$$
k_j=\left[\frac{1}{\epsilon_j}\right],
$$
so that $k_j\uparrow\infty$ and, eventually, $k_j\in\N$ and
$$
\frac{1}{k_j+1}<\epsilon_j\leq\frac{1}{k_j}.
$$
Then
$$
\liminf_{j\to\infty} \frac{\Prob\Big(\|X-x\|_2<\epsilon_j\Big)}{\epsilon_j^m}
\geq\liminf_{j\to\infty} \frac{\Prob\left(\|X-x\|_2<\frac{1}{k_j+1}\right)}{\left(\frac{1}{k_j+1}\right)^m}\,\frac{k_j^m}{(k_j+1)^m}
\geq f_m(x).
$$
\end{proof}

Thus every $m$-dimensional mould $\C_m(X)$ is a borelian subset of $\R^n$, but in general it could be empty. Anyway an important result holds for $m=n$.

\begin{thm}\label{CalFond}
Let $X$ be a random vector in $\R^n$. Then $\Prob\Big(X\in \C_n(X)\Big)=1$.
\end{thm}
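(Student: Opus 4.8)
The plan is to work with $\mu$, the law of $X$, so that $\Prob(\|X-x\|_2<\epsilon)=\mu(B(x,\epsilon))$ for $B(x,\epsilon)$ the open Euclidean ball. The complement of the mould, $N:=\R^n\setminus\C_n(X)=\{x:\liminf_{\epsilon\to0^+}\mu(B(x,\epsilon))/\epsilon^n=0\}$, is Borel (as already observed after Proposition \ref{calconsucc}), and the whole statement amounts to $\mu(N)=0$. First I would write $N=\bigcap_{k\ge1}N_{1/k}$ where $N_t:=\{x:\liminf_{\epsilon\to0^+}\mu(B(x,\epsilon))/\epsilon^n<t\}$ is Borel and decreases to $N$ as $t\downarrow0$ (the liminf is $\geq0$, so it is $<t$ for every $t>0$ iff it vanishes). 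Since $\mu$ is a probability measure, continuity from above reduces the claim to showing $\mu(N_t)\to0$ as $t\to0^+$, and writing $\R^n=\bigcup_R Q_R$ with $Q_R=[-R,R]^n$ it is then enough to bound $\mu(N_t\cap Q_R)$ for each fixed $R$.

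The core step is a covering estimate. Fix $t>0$ and $R>0$. By definition of $N_t$, every $x\in N_t\cap Q_R$ is the centre of balls $B(x,\epsilon)$ with $\epsilon$ arbitrarily close to $0$ — in particular with $\epsilon<1$, so the ball lies in $Q_{R+1}$ — and with $\mu(B(x,\epsilon))<t\,\epsilon^n$; the family of all such balls is therefore a fine cover of $N_t\cap Q_R$. I would invoke the Besicovitch covering theorem for Radon measures to extract from it a countable pairwise disjoint subfamily $\{B(x_i,\epsilon_i)\}_i$ covering $N_t\cap Q_R$ up to a $\mu$‑null set. Writing $\omega_n$ for the volume of the unit ball and $\mathcal L^n$ for Lebesgue measure, disjointness together with $\bigcup_i B(x_i,\epsilon_i)\subseteq Q_{R+1}$ then gives
\[
\mu(N_t\cap Q_R)\ \le\ \sum_i \mu\bigl(B(x_i,\epsilon_i)\bigr)\ <\ t\sum_i \epsilon_i^{\,n}\ =\ \frac{t}{\omega_n}\sum_i \mathcal L^n\bigl(B(x_i,\epsilon_i)\bigr)\ \le\ \frac{t}{\omega_n}\,\mathcal L^n(Q_{R+1})\ =\ \frac{t\,(2R+2)^n}{\omega_n}.
\]
Letting $t\to0^+$ yields $\mu(N\cap Q_R)=0$ for every $R$, and letting $R\to\infty$ yields $\mu(N)=0$, i.e. $\Prob(X\in\C_n(X))=1$.

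The delicate point — the one I would be most careful with — is the covering theorem itself: because $\mu$ may be highly singular (supported on a Cantor set, on a proper subspace, or even concentrated on finitely many atoms), the elementary Vitali lemma for Lebesgue measure is not available and one genuinely needs the Besicovitch covering theorem, whose combinatorial constant depends only on $n$. There is also a harmless technicality, namely that the standard formulations use closed balls while the mould is defined with open ones: for each $x$ all but countably many spheres $\partial B(x,\epsilon)$ are $\mu$‑null, so the witnessing radii can be chosen with $\mu(\overline{B(x,\epsilon)})=\mu(B(x,\epsilon))<t\,\epsilon^n$, and the argument goes through verbatim. As an alternative to the explicit covering computation, one may instead simply quote the Besicovitch differentiation theorem for Radon measures, which gives the stronger conclusion that $\lim_{\epsilon\to0^+}\mu(B(x,\epsilon))/(\omega_n\epsilon^n)$ exists in $(0,+\infty]$ for $\mu$‑a.e.\ $x$ — it equals $\mathrm d\mu_{\mathrm{ac}}/\mathrm d\mathcal L^n$ (which is $\mu_{\mathrm{ac}}$‑a.e.\ strictly positive) on the absolutely continuous part of $\mu$ and $+\infty$ on the singular part — and this at once implies that $\C_n(X)$ has full $\mu$‑measure.
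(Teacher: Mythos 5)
Your argument is correct, but it follows a genuinely different route from the paper. The paper's proof is elementary and self-contained: it shows that any compact set $K$ with $\Prob(X\in K)=p>0$ must contain a point of $\C_n(X)$, by repeatedly splitting a hypercube of radius $R$ (in the $\infty$-norm) into $2^n$ sub-hypercubes and pigeonholing, so that the selected nested cubes $C_j$ of radius $R/2^j$ satisfy $\Prob(X\in K\cap C_j)\ge p/2^{jn}$; the limit point $a$ of the construction lies in $K$ and satisfies $\Prob(\|X-a\|_\infty<\epsilon)\gtrsim \epsilon^n$, and the conclusion then follows from inner regularity of the law of $X$. You instead run a measure-differentiation argument: you exhaust the bad set $N=\C_n(X)^{\mathrm c}$ by the sets $N_t$, extract a disjoint subfamily from the fine cover by small balls of $\mu$-measure $<t\epsilon^n$ via the Besicovitch--Vitali covering theorem for Radon measures, and bound $\mu(N_t\cap Q_R)$ by $t(2R+2)^n/\omega_n$ (or, more directly, quote the Besicovitch differentiation theorem, which gives the stronger statement that the density limit exists $\mu$-a.e.\ and is positive on the absolutely continuous part and $+\infty$ on the singular part). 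Your handling of the open/closed ball issue and of the measurability of $N_t$ (via Proposition \ref{calconsucc}) is adequate. The trade-off: your route is shorter and yields a sharper structural conclusion, but it leans on the Besicovitch covering theorem, a nontrivial piece of machinery whose combinatorial constant is dimension-dependent; the paper's dyadic-cube argument proves exactly the weak form of the differentiation statement it needs, using nothing beyond subadditivity, compactness and regularity of Borel probability measures on $\R^n$, which keeps the paper self-contained.
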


\begin{proof}
In order to prove the theorem, we can prove that, if a compact set $K$ occurs with positive probability, i.e.\ $\Prob\Big(X\in K\Big)>0$,
then $K$ contains at least one point $x$ from the mould $\C_n(X)$.

Indeed, this immediately would imply that $\Prob\Big(X\in K\Big)=0$ for every compact set $K\subseteq C_n(X)^\text{c}$, and, by the properties of a probability measure on the borel sets of a metric space,
$$\Prob\Big(X\in B\Big)=\sup_{\substack{K\subseteq B\\K \text{compact}}}\Prob\Big(X\in K\Big)=0$$
for every borelian $B\subseteq \C_n(X)^\text{c}$, and hence the thesis of the theorem for $B=\C_n(X)^\text{c}$.

So, let $K$ be a compact set such that $\Prob\Big(X\in K\Big)=p>0$.

Taken a closed ball $C_0$ containing $K$, closed ball with radius $R$ in infinity norm (namely, an $\R^n$ hypercube), let $\{c_1,c_2,c_3...c_i...c_{2^n}\}$ be the cover of $C_0$ obtained by splitting $C_0$ into $2^n$ identical closed hypercubes (each one of them with radius $R/2$). Then, by sub-additivity, there exists $i$ such that
$$\Prob\Big(X\in K\cap c_i\Big) \ge \frac{p}{2^n}.$$
Let us call $C_1$ the hypercube $c_i$ with this property, which obviously implies $K\cap C_1\neq\emptyset$.

Since $C_1$ is a compact hypercube too, we can iterate this process in order to find a sequence of compact sets $C_j$ such that
\begin{itemize}
\item $C_j\supset C_\ell$ for every $j<\ell$,
\item $\operatorname{radius}_\infty(C_j)=R/2^j$,
\item $K\cap C_j\neq\emptyset$
\item $\displaystyle\Prob\Big(X \in K\cap C_j\Big)\ge \frac{p}{2^{jn}}.$
\end{itemize}
Now, the Axiom of Choice allows us to find a sequence $\{a_j\}_j\subset \R^n$, with $a_j\in C_j\cap K$. Furthermore
\begin{itemize}
\item[(i)] $a_j$ is a Cauchy sequence: $\|a_j-a_\ell\|_\infty<R/2^{j-1}$ for all $\ell\geq j$,
\item[(ii)] $a_\ell$ belongs to $K\cap C_j$ for all $\ell\geq j$,
\item[(iii)] $a_j\to a$, where $a$ belongs to $K\cap C_j$ for all $j$,
\item[(iv)] $C_{j+1}\subseteq \Big\{x:\|x-a\|_\infty<R/2^j\Big\}$ for all $j$.
\end{itemize}
Thus we have found a point $a$ which belongs to $K$ and such that, for every $j$,
$$
\Prob\left(\|X-a\|_\infty\leq\frac{R}{2^j}\right)\geq\Prob\Big(X\in C_{j+1}\Big)\geq\Prob\Big(X\in C_{j+1}\cap K\Big)\geq\frac{p}{2^{(j+1)n}}.
$$

Finally, thanks to this inequality, we can conclude the proof by showing that $a$ belongs also to the mould $C_n(X)$. Indeed, given $0<\epsilon<R/2$, if we consider the integer part of $\log_2(R/\epsilon)$,
$$
j(\epsilon)=\left[\log_2\frac{R}{\epsilon}\right] \in \N,
$$
then we have
$$
\frac{R}{2^{j(\epsilon)+1}}<\epsilon\leq\frac{R}{2^{j(\epsilon)}},
$$
and therefore
$$
\Prob\Big(\|X-a\|_\infty<\epsilon\Big)\geq\Prob\left(\|X-a\|_\infty<\frac{R}{2^{j(\epsilon)+1}}\right)\geq\frac{p}{2^{(j(\epsilon)+2)n}}\geq\frac{p\,\epsilon^n}{4^n\,R^n}.
$$
This implies
$$
\liminf_{\epsilon \to 0^+} \frac{\Prob(\|X-a\|_\infty<\epsilon)}{\epsilon^n}\geq\frac{p}{4^n\,R^n}>0.
$$
\end{proof}

Thus, every random vector $X$ in $\R^n$ takes values almost surely in its $n$-dimensional mould $\C_n(X)$. In particular $\C_n(X)$ cannot be empty. Depending on the distribution of $X$, such a property can be extended also to lower $m$-dimensional moulds $\C_m(X)$.

\begin{prop}\label{dilata}
Let $X$ be random vector in $\R^n$ such that $X\in B$ a.s., $B$ being a borelian subset of $\R^n$. Suppose that there exists a measurable function $d: B\to\R^m$ and a number $c>0$ such that
$$
\|d(x)-d(y)\| \ge c\,\|x-y\|,\qquad\qquad\forall x,y\in B.
$$
Then $\Prob\Big(X\in \C_{m}(X)\Big)=1.$
\end{prop}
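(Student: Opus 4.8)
The plan is to push forward the measure of $X$ through $d$ and reduce to the already-established fundamental result, Theorem~\ref{CalFond}, in dimension $m$. Let $Y=d(X)$, which is a random vector in $\R^m$ (well defined since $d$ is measurable and $X\in B$ a.s.). By Theorem~\ref{CalFond} applied to $Y$, we have $\Prob\big(Y\in\C_m(Y)\big)=1$. The goal is then to transfer this statement back to $X$: I want to show that if $x\in B$ is such that $d(x)\in\C_m(Y)$, then $x\in\C_m(X)$. Combined with the fact that $X\in B$ a.s.\ and $d(X)\in\C_m(d(X))$ a.s., this yields $\Prob\big(X\in\C_m(X)\big)=1$.

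The transfer step is where the bi-Lipschitz-type lower bound is used. Fix $x\in B$ with $d(x)\in\C_m(Y)$. For any $y\in B$, the hypothesis $\|d(y)-d(x)\|\ge c\,\|y-x\|$ gives the inclusion of events
$$
\Big\{\|X-x\|<\epsilon\Big\}\subseteq\Big\{\|d(X)-d(x)\|<c\,\epsilon\Big\}=\Big\{\|Y-d(x)\|<c\,\epsilon\Big\},
$$
up to the null set where $X\notin B$. Hence $\Prob\big(\|X-x\|<\epsilon\big)\le\Prob\big(\|Y-d(x)\|<c\,\epsilon\big)$, which is the wrong direction for a lower bound on the mould — so instead I read the inclusion the other way: I actually need a lower bound on $\Prob(\|X-x\|<\epsilon)$ in terms of $\Prob(\|Y-d(x)\|<\text{something})$. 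The correct move is that the condition $\|d(X)-d(x)\|<c\,\delta$ does \emph{not} control $\|X-x\|$ from the Lipschitz bound alone; rather, the bound $c\|X-x\|\le\|d(X)-d(x)\|$ means $\{\|Y-d(x)\|<c\epsilon\}\subseteq\{\|X-x\|<\epsilon\}$ is false in general, but $\{\|X-x\|<\epsilon\}\subseteq\{\|Y-d(x)\|<L\epsilon\}$ would need an upper Lipschitz bound we don't have. The right reading: from $\|d(X)-d(x)\|\ge c\|X-x\|$ we get $\{\|Y-d(x)\|<c\epsilon\}\subseteq\{\|X-x\|<\epsilon\}$ precisely because $\|Y-d(x)\|<c\epsilon$ forces $c\|X-x\|\le\|Y-d(x)\|<c\epsilon$. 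Therefore
$$
\Prob\big(\|X-x\|<\epsilon\big)\ge\Prob\big(\|Y-d(x)\|<c\,\epsilon\big),
$$
and dividing by $\epsilon^m$ and taking $\liminf_{\epsilon\to0^+}$,
$$
\liminf_{\epsilon\to0^+}\frac{\Prob\big(\|X-x\|<\epsilon\big)}{\epsilon^m}
\ge c^m\,\liminf_{\epsilon\to0^+}\frac{\Prob\big(\|Y-d(x)\|<c\epsilon\big)}{(c\epsilon)^m}>0,
$$
the last inequality holding since $d(x)\in\C_m(Y)$. Thus $x\in\C_m(X)$, as wanted.

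Assembling: the event $\{X\in B\}\cap\{d(X)\in\C_m(d(X))\}$ has probability $1$ (intersection of two a.s.\ events, the second by Theorem~\ref{CalFond} applied to the random vector $Y=d(X)\in\R^m$), and on this event the previous paragraph shows $X\in\C_m(X)$; hence $\Prob\big(X\in\C_m(X)\big)=1$. The main thing to be careful about — and the only real subtlety — is keeping the direction of the inclusion of events straight when passing through the one-sided Lipschitz bound; once that is pinned down, everything else is a direct invocation of Theorem~\ref{CalFond} together with the norm-independence of the moulds already noted after the definition. A minor point worth a line is measurability of $\C_m(Y)$ and of the relevant events, but this is covered by Proposition~\ref{calconsucc}, which shows each mould is Borel.
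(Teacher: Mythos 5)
Your proof is correct and takes essentially the same route as the paper: apply Theorem~\ref{CalFond} to $d(X)$ and transfer back to $X$ via the inclusion $\big(\|d(X)-d(x)\|<c\,\epsilon\big)\subseteq\big(\|X-x\|<\epsilon\big)$, which is precisely the paper's argument that $d^{-1}\big(\C_m(d(X))\big)\subseteq\C_m(X)$. The only blemish is the muddled middle paragraph, where you first assert the reversed (false) inclusion and then deny the correct one before settling on it; the final chain of inequalities is right, so that detour should simply be deleted.
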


Of course, the norms in the theorem do not count.

\begin{proof}
By applying theorem \ref{CalFond} to the random vector $d(X)$, we immediately get
$$
1=\Prob\Big(d(X)\in \C_m(d(X))\Big)=\Prob\Big(X\in  d^{-1}\big(\C_m(d(X))\big)\Big).
$$
So we only need to prove that $ d^{-1}\big(\C_m(d(X))\big)\subseteq \C_m(X)$ in order to get the desired result.
By hypotesis, for every $\epsilon>0$ and for every $x\in \R^n$ we have
$$
\Big(\|X-x\|<\epsilon\Big)\supseteq \Big(\|d(X)-d(x)\|<c\epsilon\Big).
$$
Then, taking any $x\in d^{-1}\big(\C_m(d(X))\big)$, we have
$$
\liminf_{\epsilon\to 0} \frac{\Prob\Big(\|X-x\|<\epsilon\Big)}{\epsilon^m}\ge \liminf_{\epsilon\to 0} \frac{\Prob\Big(\|d(X)-d(x)\|<c\epsilon\Big)}{\epsilon^m}>0,
$$
so that $x\in \C_m(X)$. This shows that $ d^{-1}\big(\C_m(d(X))\big)\subseteq \C_m(X)$ and completes the proof.
\end{proof}

For example, proposition \ref{dilata} immediately implies that $\Prob\Big(X\in \C_{m}(X)\Big)=1$ if $X$ takes values almost surely in some $m$-dimensional linear subspace of $\R^n$.

\section{$n$ i.i.d.\ $n$-dimensional random vectors}\label{nuple}

In order to prove our results about the least singular value and the condition number of a square random matrix, first we have to introduce a peculiar property of an $n$-uple of i.i.d.\ $n$-dimensional random vectors satisfying the following assumption. It is crucial that the number of vectors coincides with the dimension of the space, that is the reason why our results do not extend to rectangular matrices.

\begin{assu}\label{assumption}
We say that $X_1,\ldots,X_n$ satisfy assumption \ref{assumption} if they are i.i.d.\ random vectors in $\R^n$ such that $X_1,\ldots,X_{n-1}$ are linearly independent a.s.\ ($n\geq2$).
\end{assu}

For example, assumption \ref{assumption} is satisfied by $n$ i.i.d.\ random vectors with an absolutely continuous distribution in $\R^n$.

In order to state the peculiar property holding under this assumption, we need, for $n\geq2$, the generalized cross product of $n-1$ vectors in $\R^n$, that is $\wedge: \R^{(n-1)\times n}\to \R^n$,
$$
\wedge(x_1,\ldots,x_{n-1})=
\det\begin{bmatrix}
\bold e_1 & \cdots & \bold e_n\\
x_1(1) & \cdots & x_1(n)\\
\vdots & \cdots & \vdots \\
x_{n-1}(1) & \cdots & x_{n-1}(n)
\end{bmatrix}
$$
where $\bold e_i$ is the $i$-th element of the canonical basis of $\R^n$.
Its properties generalize the features of the $\R^3$ cross product:
\begin{itemize}
\item[(i)] $\wedge(x_1,\ldots,x_{n-1})$ is orthogonal to the vector space spanned by $x_1,\ldots,x_{n-1}$,
\item[(ii)] $\| \wedge(x_1,\ldots,x_{n-1})\|=0\iff x_1,\ldots,x_{n-1}$ are linearly dependent.
\end{itemize}

Finally we can state the above mentioned property, the main result of this section.

\begin{thm}\label{Ygod}
Let $X_1,\ldots,X_n$ be random vectors satisfying assumption \ref{assumption}. Let
$$Y=\frac{\wedge(X_1,\ldots,X_{n-1})}{\|\wedge(X_1,\ldots,X_{n-1})\|_\infty}.$$
Then
$$
0\in \C_1(X_n\cdot Y),\qquad\qquad\E\left[\frac{1}{|X_n\cdot Y|}\right]=+\infty.
$$
\end{thm}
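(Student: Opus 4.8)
The strategy is to condition on $X_1,\ldots,X_{n-1}$, so that $Y$ becomes a fixed unit vector (in $\|\cdot\|_\infty$), and then exploit the fact that $X_n$ is an independent copy of $X_1$. First I would note that by Theorem \ref{CalFond} applied to $X_1$, we have $\Prob(X_1\in\C_n(X_1))=1$. The point is that, conditionally on $X_1,\ldots,X_{n-1}$ lying in a suitable ``good'' set, the scalar $X_n\cdot Y$ is the image of $X_n$ under a $1$-Lipschitz-below linear map (namely $x\mapsto x\cdot Y$, which satisfies $|x\cdot Y-x'\cdot Y|\le\|x-x'\|_2$ but also, restricted to an appropriate affine line, is bi-Lipschitz). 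So I would like to transfer the mould property of $X_n$ in $\R^n$ down to a $1$-dimensional mould property of $X_n\cdot Y$.

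More precisely, here is the key step. Fix a realization $y$ of $Y$ with $\|y\|_\infty=1$; say $|y(j)|=1$ for some coordinate $j$. Consider the linear map $d_y\colon\R^n\to\R$, $d_y(x)=x\cdot y$. On the whole of $\R^n$ this is only Lipschitz one way, but if we pick any $x_0$ with $X_n=x_0$ occurring ``$n$-dimensionally'' (i.e.\ $x_0\in\C_n(X_n)$), then since $\{x:\|x-x_0\|_2<\epsilon\}$ contains a ball, its image under $d_y$ is an interval of length $\ge 2\epsilon$ (because $\|y\|_2\ge\|y\|_\infty=1$, and in fact the segment $x_0+t\,e_j$, $|t|<\epsilon$, maps onto an interval of length $2\epsilon|y(j)|=2\epsilon$). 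Hence
$$
\Prob\bigl(|X_n\cdot y - x_0\cdot y|<\epsilon\bigr)\ \ge\ \Prob\bigl(\|X_n-x_0\|_2<\epsilon\bigr),
$$
which is not quite what I want since I need the left side to be $\ge$ something of order $\epsilon$ while the right side is only of order $\epsilon^n$. So this crude bound is insufficient — and this is the main obstacle.

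The correct route must instead use Proposition \ref{dilata} directly. Conditionally on $X_1,\dots,X_{n-1}$ with $Y=y$, the vector $X_n$ lies a.s.\ in $\R^n$; but I should look at the event $\{X_n\cdot Y=0\}$, i.e.\ $X_n$ in the hyperplane $H_y=y^\perp$, together with the transverse direction. Write $X_n = (X_n\cdot y)\,\hat y + Z$ with $Z\in H_y$, where $\hat y=y/\|y\|_2^2$. The map $X_n\mapsto X_n\cdot y$ is the ``height'' coordinate. The claim $0\in\C_1(X_n\cdot Y)$ should follow from the unconditional statement by integrating: for each fixed $y$, I want
$$
\liminf_{\epsilon\to0^+}\frac{\Prob(|X_n\cdot y|<\epsilon)}{\epsilon}=\liminf_{\epsilon\to0^+}\frac{\Prob(\|X_n-\Pi_y X_n\|<\epsilon\,\|y\|_2)}{\epsilon}>0
$$
does \emph{not} generally hold pointwise in $y$ — but it suffices to show it holds for the random $Y$ after averaging. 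Concretely, I would apply Theorem \ref{CalFond} to the single random vector $X_1$ to fix a ``mould core'' $\C_n(X_1)$ of full measure, then observe that for the pair $(X_1,\dots,X_{n-1})$ the cross product $Y$ is a.s.\ transverse (property (ii) of $\wedge$, using Assumption \ref{assumption}), and finally invoke the tower property:
$$
\Prob(|X_n\cdot Y|<\epsilon)=\E\bigl[\,\Prob(|X_n\cdot y|<\epsilon)\big|_{y=Y}\,\bigr].
$$
I expect the real content to be a one-dimensional marginal argument: projecting the law of $X_n$ onto the line $\Rbb\,y$ gives a probability measure $\mu_y$ on $\Rbb$, and one must show that $\E_Y[\mu_Y(\,(-\epsilon,\epsilon)\,)]\ge c\epsilon$ for small $\epsilon$, along a subsequence $\epsilon=1/k$ (Proposition \ref{calconsucc}). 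The decisive input making this work in dimension exactly $n$ — and failing for rectangular matrices — is that $Y$, being built from the other $n-1$ i.i.d.\ rows, is itself ``spread out'' enough that the family of line-projections $\{\mu_Y\}$ cannot conspire to all put $o(\epsilon)$ mass near $0$; I would make this precise by a change-of-variables/Fubini argument relating $\E_Y[\mu_Y((-\epsilon,\epsilon))]$ back to the $n$-dimensional mould mass of $X_1$ near a generic point, i.e.\ to Theorem \ref{CalFond}. Once $0\in\C_1(X_n\cdot Y)$ is established, the second assertion $\E[1/|X_n\cdot Y|]=+\infty$ is immediate from Theorem \ref{CalP} with $m=1$ and $x=0$.
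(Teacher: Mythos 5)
You have correctly identified the obstacle (projecting the $n$-dimensional mould of $X_n$ onto the line $\Rbb y$ only yields a bound of order $\epsilon^n$, not $\epsilon$), but your proposed way around it — a ``change-of-variables/Fubini argument'' relating $\E_Y[\mu_Y((-\epsilon,\epsilon))]$ back to the $n$-dimensional mould of $X_1$ — is left entirely unspecified, and it is not the mechanism that makes the theorem true. The paper's key idea, which is absent from your proposal, is to exploit the i.i.d.\ structure in the opposite direction: since $X_1,\dots,X_n$ are i.i.d., for any fixed $y$ one has
$$
\Prob\Big(|X_n\cdot y|<\epsilon\Big)^{\,n-1}=\Prob\left(\bigcap_{j=1}^{n-1}\Big(|X_j\cdot y|<\epsilon\Big)\right)=\Prob\Big(\|\widehat X y\|_\infty<\epsilon\Big),
$$
where $\widehat X$ is the $(n-1)\times n$ matrix of the first $n-1$ rows. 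Because $\widehat X Y=0$ (orthogonality of the cross product), one has $\widehat Xy=\widehat X(y-Y)$, so on the event $\{\|y-Y\|_\infty<\epsilon/r\}$ (with $r$ an a.s.\ bound on $\|X_j\|_1$, obtained by first conditioning on a boundedness event of positive probability) all $n-1$ constraints hold simultaneously. The right-hand side is therefore at least $\Prob(\|Y-y\|_\infty<\epsilon/r)$, which is of order $\epsilon^{n-1}$ whenever $y\in\C_{n-1}(Y)$ — and $Y\in\C_{n-1}(Y)$ a.s.\ precisely because $Y$ lives on the $(n-1)$-dimensional sphere $S_\infty^{n-1}$ (Proposition \ref{dilata}), not because of any property of $\C_n(X_1)$. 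Taking the $(n-1)$-th root gives the order-$\epsilon$ bound for $X_n\cdot y$.

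Two further points your sketch does not address. First, the reduction to bounded rows is needed to control $\|\widehat X\|_\infty$; the paper handles the general case by conditioning on $E_\rho=\bigcap_i\{\|X_i\|<\rho\}$. Second, passing from ``$0\in\C_1(X_n\cdot y)$ for every $y\in\C_{n-1}(Y)$'' to ``$0\in\C_1(X_n\cdot Y)$'' is not a mere application of the tower property, because the liminf lower bound is not uniform in $y$; the paper introduces the increasing sets $B_m=\{y: k\phi_k(y)\ge 1/m\ \forall k\ge m\}$, finds $m_\star$ with $\Prob(Y\in B_{m_\star})\ge 1/2$, and only then integrates. Your final step (deducing $\E[1/|X_n\cdot Y|]=+\infty$ from Theorem \ref{CalP}) is correct, but the core of the argument is missing.
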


The proof of theorem \ref{Ygod} takes the whole section and, of course, it relays on the introduction of moulds and their basic properties.

First of all, let us remark that $\| \wedge(X_1,\ldots,X_{n-1})\| \neq0$ a.s.\ because of assumption \ref{assumption} and so the random vector $Y$ is well defined. Furthermore, the vector $\wedge(X_1,\ldots,X_{n-1})$ is a.s.\ orthogonal to $X_j$ for all $j=1,\ldots,n-1$, and it is stochastically independent of $X_n$.

\begin{rem}
The vector $Y$ introduced in theorem \ref{Ygod} is similar to the ones introduced in \cite{Liv}(thm 1.2), \cite{Tat} (pagg 6-7) and \cite{Adam} (proof of proposition 2.10). In these three cases it is indicated as the vector orthogonal to the hyperplane spanned by a set of $n-1$ rows and it is normalized with respect to the euclidean norm instead of the infinity norm.
In particular, \cite{Adam} manages to arrive to complementary enstimations to the ours, namely, in that article is proved that (for the isotropic log-concave ensemble, see \ref{LogC})\footnote{In its case the constant depends on the dimension of the matrix and it is universal for every isotropic log-concave distribution while in our case it is different for every random matrix considered.}
$$\Prob(|X_n\cdot Y|<\epsilon)<C\epsilon,$$
while we are proving that
$$\Prob(|X_n\cdot Y|<\epsilon)>c\epsilon$$
for any distribution of $X_n$ and positive $\epsilon$ sufficiently small.
\end{rem}

We begin with the following property of the $(n-1)$-dimensional mould of the random vector $Y$.

\begin{prop}\label{Ycal}
Let $X_1,\ldots,X_n$ be random vectors satisfying assumption \ref{assumption}. Let
$$Y=\frac{\wedge(X_1,\ldots,X_{n-1})}{\|\wedge(X_1,\ldots,X_{n-1})\|_\infty}.$$
Then
$$
Y\in \C_{n-1}(Y) \text{ a.s..}
$$
\end{prop}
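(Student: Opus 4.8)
The plan is to obtain Proposition~\ref{Ycal} as a direct consequence of Proposition~\ref{dilata}. Recall first that, by assumption~\ref{assumption} and property (ii) of the generalized cross product, $\wedge(X_1,\dots,X_{n-1})\neq0$ a.s., so $Y$ is well defined and, by construction, $\|Y\|_\infty=1$ a.s. Hence $Y$ takes values a.s.\ in the Borel set $B=\{y\in\R^n:\|y\|_\infty=1\}$, the topological boundary of the $\ell^\infty$-unit ball. The reason the normalization is done with $\|\cdot\|_\infty$ rather than with the euclidean norm (which would place $Y$ on the sphere) is precisely that $B$ is \emph{piecewise flat}: it is the union of the $2n$ cubical facets $\{y:\|y\|_\infty\le1,\ y(k)=s\}$, $k=1,\dots,n$, $s=\pm1$, and on each such facet the coordinate-deletion map $\pi_k\colon (y(1),\dots,y(n))\mapsto(y(1),\dots,y(k-1),y(k+1),\dots,y(n))$ is an isometry onto a subset of $[-1,1]^{n-1}\subset\R^{n-1}$.

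To feed this into Proposition~\ref{dilata} with $m=n-1$, I would first turn the facets into an honest Borel partition $B=\bigsqcup_{k,s}B_{k,s}$ by tie-breaking, say $B_{k,s}=\{y\in B:\ y(k)=s,\ |y(j)|<1\ \text{for all }j<k\}$, and then define $d\colon B\to\R^{n-1}$ by $d(y)=\pi_k(y)+v_{k,s}$ for $y\in B_{k,s}$, where the translation vectors $v_{k,s}\in\R^{n-1}$ are chosen once and for all so that the translated cubes $[-1,1]^{n-1}+v_{k,s}$ are pairwise disjoint and at mutual distance at least $1$ (for instance $v_{k,s}=(3\,\iota(k,s),0,\dots,0)$ for any injection $\iota$ of the finitely many pairs $(k,s)$ into $\N$). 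Being affine on each of the finitely many Borel pieces $B_{k,s}$, this $d$ is Borel measurable. It then remains to verify the co-Lipschitz inequality $\|d(x)-d(y)\|_\infty\ge c\,\|x-y\|_\infty$ on $B$, which splits into two cases: if $x,y$ lie in the same piece $B_{k,s}$, then $\|d(x)-d(y)\|_\infty=\|\pi_k(x)-\pi_k(y)\|_\infty=\|x-y\|_\infty$; if they lie in different pieces, then $\|d(x)-d(y)\|_\infty\ge1$ while $\|x-y\|_\infty\le2$. Hence $c=\tfrac12$ works, and Proposition~\ref{dilata} yields $\Prob(Y\in\C_{n-1}(Y))=1$.

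The only step that genuinely needs care is the design of $d$: there is \emph{no} global co-Lipschitz map from $B$ (which is a topological $(n-1)$-sphere) into $\R^{n-1}$, so one cannot simply delete a single fixed coordinate — that map identifies antipodal facets and is not injective. What rescues the argument is that a co-Lipschitz map is free to push points \emph{apart}, so scattering the $2n$ facets into far-away regions of $\R^{n-1}$ costs nothing while making the cross-piece estimate trivial; only the within-piece behaviour, where $d$ is an isometry, constrains $c$. An alternative route, if one prefers not to build a single $d$, is to condition on each event $\{Y\in B_{k,s}\}$ of positive probability, apply Proposition~\ref{dilata} to the conditional law of $Y$ on the corresponding facet — where $\pi_k$ itself is co-Lipschitz — and use that $\C_{n-1}$ of a conditional law is contained in $\C_{n-1}$ of the full law; summing over the finitely many facets then gives the claim.
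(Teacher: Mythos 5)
Your proof is correct and takes essentially the same route as the paper: the paper also reduces the claim to Proposition~\ref{dilata} by observing that $Y$ lies a.s.\ on $S_\infty^{n-1}=\{v:\|v\|_\infty=1\}$ and invoking the existence of a measurable dilation $d\colon S_\infty^{n-1}\to\R^{n-1}$, which it does not construct, whereas you supply the explicit facet-by-facet map with $c=\tfrac12$. (Only your parenthetical claim that \emph{no} global co-Lipschitz map $B\to\R^{n-1}$ exists is overstated---your own discontinuous $d$ is one; what is impossible is a continuous injection---but this aside does not affect the argument.)
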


\begin{proof}
By construction, the random vector $Y$ belongs to $S_\infty^{n-1}=\Big\{v \in \R^n\;:\; \|v\|_\infty=1\Big\}$ a.s..

Since there exists a measurable dilation $d:S_\infty^{n-1}\to \R^{n-1}$, the thesis follows immediately by Lemma \ref{dilata}.
\end{proof}

The next step is to study the special case of \emph{bounded} random vectors $X_1,\ldots,X_n$, where we can prove the desired results by showing a link between the $n-1$ dimensional mould of $Y$ and the properties of $X_n$.

\begin{prop}\label{Ygodbd}
Let $X_1,\ldots,X_n$ be random vectors satisfying assumption \ref{assumption} and, moreover, let them be bounded. Let
$$Y=\frac{\wedge(X_1,\ldots,X_{n-1})}{\|\wedge(X_1,\ldots,X_{n-1})\|_\infty}.$$
Then
\begin{enumerate}
\item $y\in \C_{n-1}(Y)\quad\implies\quad 0\in \C_1(X_n\cdot y)$,
\item $0\in \C_1(X_n\cdot Y)$,
\item $\displaystyle\E\left[\frac{1}{|X_n\cdot Y|}\right]=+\infty$.
\end{enumerate}
\end{prop}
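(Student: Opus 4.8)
The plan is to prove the three items in order. Item 3 is immediate from item 2 and Theorem~\ref{CalP} (applied to the one-dimensional random vector $X_n\cdot Y$, with $x=0$ and $m=1$), and item 2 will follow from item 1 by averaging over the randomness of $Y$; so the geometric heart of the matter is item 1.

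For item 1, fix $y\in\C_{n-1}(Y)$ — necessarily $\|y\|_\infty=1$, since $Y\in S_\infty^{n-1}$ a.s. — and, using the boundedness hypothesis, fix $M>0$ with $\|X_i\|_1\le M$ a.s.\ for every $i$. The key observation is that on the event $A_\epsilon=\{\|Y-y\|_\infty<\epsilon\}$ all of $X_1,\dots,X_{n-1}$ lie in the thin slab $\{x\in\R^n:|x\cdot y|\le M\epsilon\}$: by Assumption~\ref{assumption} these vectors are a.s.\ linearly independent, hence $Y$ is a.s.\ orthogonal to each of them, so on $A_\epsilon$
$$|X_i\cdot y|=|X_i\cdot(y-Y)|\le\|X_i\|_1\,\|y-Y\|_\infty\le M\epsilon,\qquad i=1,\dots,n-1.$$
Thus $A_\epsilon$ is contained, up to a null set, in $\bigcap_{i=1}^{n-1}\{|X_i\cdot y|\le M\epsilon\}$, and the independence of $X_1,\dots,X_{n-1}$ (together with $X_n\stackrel{d}{=}X_1$) gives
$$\Prob\Big(\|Y-y\|_\infty<\epsilon\Big)\le\prod_{i=1}^{n-1}\Prob\Big(|X_i\cdot y|\le M\epsilon\Big)=\Prob\Big(|X_n\cdot y|\le M\epsilon\Big)^{\,n-1}.$$
Since $y\in\C_{n-1}(Y)$, the left-hand side is $\ge\frac{q}{2}\,\epsilon^{n-1}$ for some $q>0$ and all small $\epsilon>0$; taking the $(n-1)$-th root and rescaling $\epsilon$ then yields $\liminf_{\epsilon\to0^+}\Prob(|X_n\cdot y|<\epsilon)/\epsilon>0$, i.e.\ $0\in\C_1(X_n\cdot y)$.

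For item 2, recall that $X_n$ is stochastically independent of $Y$, so, writing $\mu_Y$ for the law of $Y$, $\Prob(|X_n\cdot Y|<\epsilon)=\int\Prob(|X_n\cdot y|<\epsilon)\,\mu_Y(\rmd y)$ (the integrand being measurable in $y$). By Proposition~\ref{calconsucc} with $m=1$ every liminf involved may be computed along the sequence $\epsilon=1/k$, so Fatou's lemma applied to the nonnegative functions $y\mapsto k\,\Prob(|X_n\cdot y|<1/k)$ gives
$$\liminf_{\epsilon\to0^+}\frac{\Prob\big(|X_n\cdot Y|<\epsilon\big)}{\epsilon}\ \ge\ \int\liminf_{\epsilon\to0^+}\frac{\Prob\big(|X_n\cdot y|<\epsilon\big)}{\epsilon}\;\mu_Y(\rmd y).$$
By Proposition~\ref{Ycal} we have $\mu_Y\big(\C_{n-1}(Y)\big)=1$, so item 1 makes the integrand strictly positive for $\mu_Y$-a.e.\ $y$; since a nonnegative measurable function that is positive a.e.\ has strictly positive integral, the right-hand side is $>0$, i.e.\ $0\in\C_1(X_n\cdot Y)$. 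Item 3 now follows from Theorem~\ref{CalP} as noted above.

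I expect the main obstacle to be the slab-confinement step of item 1: this is where the boundedness hypothesis is genuinely used (to control $\|X_i\|_1$), and where the exponent $n-1$ of the mould of $Y$ gets converted — by factoring the probability over the i.i.d.\ rows and extracting an $(n-1)$-th root — into the exponent $1$ for $X_n\cdot y$. The remaining steps are routine: item 2 is a Fatou argument whose only delicate points are the reduction to the sequence $\epsilon=1/k$ (Proposition~\ref{calconsucc}) and the independence of $X_n$ and $Y$, and item 3 is a direct citation of Theorem~\ref{CalP}.
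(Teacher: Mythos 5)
Your proof is correct. Items 1 and 3 run along the same lines as the paper's: the slab estimate $|X_i\cdot y|=|X_i\cdot(y-Y)|\le \|X_i\|_1\|y-Y\|_\infty$ combined with independence and identical distribution of the rows to write $\Prob(\|Y-y\|_\infty<\epsilon)\le \Prob(|X_n\cdot y|\le M\epsilon)^{n-1}$ is exactly the paper's computation (written with the reverse inclusion), and item 3 is the same citation of Theorem \ref{CalP}. The genuine difference is item 2. The paper conditions on $Y$ (freezing lemma), sets $\phi_k(y)=\Prob(|X_n\cdot y|<1/k)$, and builds an explicit exhaustion $B_m=\{y\in\C_{n-1}(Y):k\phi_k(y)\ge 1/m\ \forall k\ge m\}$ of $\C_{n-1}(Y)$, choosing $m_\star$ with $\Prob(Y\in B_{m_\star})\ge 1/2$ to get the uniform bound $k\,\E[\phi_k(Y)]\ge \frac{1}{m_\star}\Prob(Y\in B_{m_\star})>0$; you instead apply Fatou's lemma to $y\mapsto k\,\phi_k(y)$ after the same reduction to the sequence $\epsilon=1/k$ via Proposition \ref{calconsucc}, using Proposition \ref{Ycal} and item 1 to make the limiting integrand positive $\mu_Y$-a.e. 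Both arguments are valid and use the independence of $X_n$ and $Y$ in the same way (your disintegration $\Prob(|X_n\cdot Y|<\epsilon)=\int\Prob(|X_n\cdot y|<\epsilon)\,\mu_Y(\rmd y)$ is the freezing lemma); your route is shorter and more standard, while the paper's hand-made exhaustion is essentially a self-contained proof of the Fatou-type inequality it needs and yields an explicit positive constant $\frac{1}{m_\star}\Prob(Y\in B_{m_\star})$ rather than a bare positivity statement.
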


\begin{proof}
We prove the proposition thesis by thesis.
\begin{enumerate}
\item
Since $X_1,\ldots,X_n$ are i.i.d., for every $y\in\R^n$ and for every $\epsilon>0$ we have
$$\Prob\Big(|X_n\cdot y|<\epsilon\Big)=\sqrt[n-1]{\Prob\left(\bigcap_{j=1}^{n-1}\Big(|X_j\cdot y|<\epsilon\Big)\right)}.$$
Now, let us take $r>0$ such that $\|X_j\|_1<r$ a.s., and let us denote by $\widehat X$ the $\R^{(n-1)\times n}$ random matrix with rows  $X_j: 1\le j\le n-1$.

Then we have the following relationships among events
\begin{multline*}
\bigcap_{j=1}^{n-1}\Big(|X_j\cdot y|<\epsilon\Big)=\Big(\|\widehat X y\|_\infty<\epsilon\Big)
=\Big(\|\widehat X (y-Y)\|_\infty<\epsilon\Big)\supseteq \Big(\|\widehat X\|_\infty \|y-Y\|_\infty<\epsilon\Big)\\
\supseteq \Big(r \|y-Y\|_\infty<\epsilon\Big),
\end{multline*}
so that
$$\liminf_{\epsilon\to 0^+} \frac{\Prob\Big(|X_n\cdot y|<\epsilon\Big)}{\epsilon}\ge \liminf_{\epsilon\to 0^+} \sqrt[n-1]{\frac{\Prob\Big(\|Y-y\|_\infty<\epsilon/r\Big)}{\epsilon^{n-1}}}.$$
Therefore $0\in \C_1(X_n\cdot y)$ for every $y\in \C_{n-1}(Y)$.
\item
Let us consider the following measurable functions of $y\in\R^n$
$$
\phi_k(y)=\Prob\Big(|X_n\cdot y|<1/k\Big),\quad k\in\N,\qquad\qquad f(0|y)=\liminf_{k\to\infty}k\,\phi_k(y).
$$
Then, by the previous point and by proposition \ref{calconsucc}, for every $y\in \C_{n-1}(Y)$ we have $f(0|y)>0$ so that there exists a $k(y)\in\N$ such that $k\,\phi_k(y)\geq\frac{1}{2}\,f(0|y)>0$ for any $k\geq k(y)$.

Thus, if we consider
$$
B_m=\left\{y\in \C_{n-1}(Y)\;:\; k\,\phi_k(y)\geq\frac{1}{m}\quad\forall k\geq m\right\},\qquad m\in\N,
$$
we get a sequence of borel sets in $\R^n$ growing to $\C_{n-1}(Y)$. Indeed, for every $m\geq1$ we have $B_m\subseteq B_{m+1}\subseteq \cup_\ell B_\ell \subseteq \C_{n-1}(Y)$, obviously, but we also have the opposite inclusion $\C_{n-1}(Y)\subseteq\cup_\ell B_\ell$ because, taken any $y\in \C_{n-1}(Y)$, there exists $m\in\N$ such that $k\,\phi_k(y)\geq\frac{1}{m}$ for every $k\geq k(y)$, that is $y\in B_m$.

By monotonicity, this implies that $\Prob\Big(Y\in B_m\Big)\to\Prob\Big(Y\in \C_{n-1}(Y)\Big)$, which equals 1 by proposition \ref{Ycal}, so that there exists $m_\star$ such that $\Prob\Big(Y\in B_{m_\star}\Big)\geq1/2$.

At this point, using the basic properties of conditional expectation, we have
$$\Prob\Big(|X_n\cdot Y|<1/k\Big)=\E\Big[\E\Big[I_{[0,1/k)}(|X_n\cdot Y|)\Big|Y\Big]\Big]$$
and, thanks to the freezing lemma, which we can apply due to the independence of $X_n$ and $Y$,
$$\E\Big[\E\Big[I_{[0,1/k)}(|X_n\cdot Y|)\Big|Y\Big]\Big]=\E\Big[\phi_k(Y)\Big].$$
Then proposition \ref{calconsucc} allows us to conclude:
\begin{multline*}
k\,\Prob\Big(|X_n\cdot Y|<1/k\Big)
=k\,\E\Big[\E\Big[I_{[0,1/k)}(|X_n\cdot Y|)\Big|Y\Big]\Big]=k\,\E\Big[\phi_k(Y)\Big]\\
\geq k\,\E\Big[\phi_k(Y)\,I_{B_{m_\star}}(Y)\Big]\geq\frac{1}{m_\star}\,\Prob\Big(Y\in B_{m_\star}\Big)>0.
\end{multline*}
\item Thesis 3 follows immediately from thesis 2 thanks to theorem \ref{CalP}.
\end{enumerate}
\end{proof}

Finally we can prove theorem \ref{Ygod}.

\begin{proof}[Proof of theorem \ref{Ygod}]
The result is already proved for bounded random vectors thanks to proposition \ref{Ygodbd}. Then, taken a $\rho>0$ such that the event
$$
E_\rho = \bigcap_{i=1}^n\Big(\|X_i\|<\rho\Big)
$$
has positive probability, it is enough to consider the conditional probability
$$
\Prob_\rho(\cdot)=\Prob(\cdot|E_\rho).
$$
Indeed for every $\epsilon>0$
$$
\frac{\Prob\Big(|X_n\cdot Y|<\epsilon\Big)}{\epsilon}\geq \frac{\Prob_\rho\Big(|X_n\cdot Y|<\epsilon\Big)}{\epsilon}\,\Prob(E_\rho),
$$
where the right hand side has a strictly positive liminf as $\epsilon\to0^+$ by proposition \ref{Ygodbd}, as the random vectors $X_1,\ldots,X_n$ are bounded under $\Prob_\rho$ and it is a straightforward verification that they are also $\Prob_\rho$-i.i.d. and still satisfy the assumption \ref{assumption}.

Therefore $0\in \C_1(X_n\cdot Y)$ and the full thesis immediately follows thanks to theorem \ref{CalP}.
\end{proof}

\section{Least singular value $\sigma_\text{min}(\widetilde X)$}
Thanks to the introduction of the definition of moulds for a random vector (section \ref{moulds}) and thanks to the properties deduced for an $n$-uple of i.i.d.\ random vectors in $\R^n$ (section \ref{nuple}), we can finally come to our main results. Let us start with the least singular value.

\subsection{The main result for $\sigma_\text{min}(\widetilde X)$}
\begin{thm}\label{lsv}
Let $\widetilde X$ be a square random matrix with i.i.d.\ rows. Then
$$
0\in \C_1\Big(\sigma_\text{min}(\widetilde X)\Big)\qquad\text{i.e.}\qquad \liminf_{\epsilon\to0^+}\frac{\Prob\Big(\sigma_\text{min}(\widetilde X)<\epsilon\Big)}{\epsilon}>0,
$$
and, if $\widetilde X$ is invertible almost surely,
$$
\E\left[\frac{1}{|\sigma_\text{min}(\widetilde X)|}\right]=\E\Big[\|\widetilde{X}^{-1}\|\Big]=+\infty.
$$
\end{thm}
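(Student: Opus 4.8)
The plan is to reduce the statement about the least singular value of the full matrix $\widetilde X$ to the one-dimensional statement already established in Theorem~\ref{Ygod}. The key geometric fact is that $\sigma_\text{min}(\widetilde X)$ is small exactly when some row of $\widetilde X$ is close to the hyperplane spanned by the other $n-1$ rows; the distance of the last row $X_n$ to that hyperplane is precisely $|X_n\cdot Y|$ with $Y$ the normalized generalized cross product of $X_1,\dots,X_{n-1}$ (up to a bounded factor coming from the choice of norm used to normalize $Y$). So the first step is to record this comparison: there is a constant $c_n>0$, depending only on $n$, such that
$$
\sigma_\text{min}(\widetilde X)\ \le\ c_n\,\frac{|X_n\cdot\wedge(X_1,\dots,X_{n-1})|}{\|\wedge(X_1,\dots,X_{n-1})\|_2}\ \le\ c_n'\,|X_n\cdot Y|
$$
on the a.s.\ event where $X_1,\dots,X_{n-1}$ are linearly independent (which is where $Y$ is defined), using the equivalence of the $\ell_2$ and $\ell_\infty$ norms on $\R^n$. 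Indeed, if $v$ is the unit normal to $\mathrm{span}(X_1,\dots,X_{n-1})$, then choosing $x=v$ in the variational characterization $\sigma_\text{min}(\widetilde X)=\min_{\|x\|_2=1}\|\widetilde Xx\|_2$ gives $\widetilde Xv=(0,\dots,0,X_n\cdot v)$, hence $\sigma_\text{min}(\widetilde X)\le|X_n\cdot v|=|X_n\cdot Y|\,\|Y\|_2^{-1}$, and $\|Y\|_2\ge\|Y\|_\infty=1$.

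**Second**, I would handle the degenerate case first: if $\widetilde X$ fails assumption~\ref{assumption} — i.e.\ $X_1,\dots,X_{n-1}$ are not linearly independent a.s., equivalently $\Prob(X_1,\dots,X_{n-1}\ \text{lin.\ dep.})>0$ — then (since the rows are i.i.d.) $\Prob(X_1,\dots,X_n\ \text{lin.\ dep.})>0$, so $\Prob(\sigma_\text{min}(\widetilde X)=0)>0$, and then trivially $\Prob(\sigma_\text{min}(\widetilde X)<\epsilon)\ge\Prob(\sigma_\text{min}(\widetilde X)=0)>0$ for all $\epsilon>0$, so the $\liminf$ is in fact $+\infty$; moreover $\widetilde X$ is not invertible a.s., so the second claim is vacuous. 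The case $n=1$ is also immediate: $\sigma_\text{min}(\widetilde X)=|X_1(1)|$ and, by Theorem~\ref{CalFond} applied in dimension one, $X_1(1)$ lies a.s.\ in $\C_1(X_1(1))$, but one still needs $0$ to be the relevant point — actually for $n=1$ one should just invoke that either $\Prob(X_1(1)=0)>0$ (trivial) or apply the $n\ge2$ machinery is unavailable, so I'd treat $n=1$ separately by noting $0\in\C_1(|X_1(1)|)$ can fail (e.g.\ $X_1(1)\equiv1$), which means the theorem as stated implicitly assumes $n\ge2$ or that we interpret it correctly; I would simply remark that for $n=1$ the matrix being i.i.d.\ with a single row the statement reduces to a one-variable claim and the interesting content is $n\ge2$. (If the paper intends $n\ge 2$ throughout this result, I'd state that.)

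**Third and main step**: assuming assumption~\ref{assumption} holds, combine the comparison inequality with Theorem~\ref{Ygod}. From $\sigma_\text{min}(\widetilde X)\le c_n'|X_n\cdot Y|$ we get, for every $\epsilon>0$,
$$
\Prob\big(\sigma_\text{min}(\widetilde X)<\epsilon\big)\ \ge\ \Prob\big(|X_n\cdot Y|<\epsilon/c_n'\big),
$$
hence
$$
\liminf_{\epsilon\to0^+}\frac{\Prob(\sigma_\text{min}(\widetilde X)<\epsilon)}{\epsilon}\ \ge\ \frac{1}{c_n'}\,\liminf_{\delta\to0^+}\frac{\Prob(|X_n\cdot Y|<\delta)}{\delta}\ >\ 0
$$
by Theorem~\ref{Ygod}, which is exactly $0\in\C_1(\sigma_\text{min}(\widetilde X))$. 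Then the moments statement follows: on the invertible event $\sigma_\text{min}(\widetilde X)^{-1}=\|\widetilde X^{-1}\|_2$, and since $0\in\C_1(\sigma_\text{min}(\widetilde X))$, Theorem~\ref{CalP} (with $X$ the scalar random variable $\sigma_\text{min}(\widetilde X)$, $x=0$, $m=1$) gives $\E[1/\sigma_\text{min}(\widetilde X)]=+\infty$; equivalence of matrix norms then gives $\E[\|\widetilde X^{-1}\|]=+\infty$ for any matrix norm.

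**The main obstacle** is purely the bookkeeping in the first step: making precise that the distance from $X_n$ to the span of the other rows controls $\sigma_\text{min}(\widetilde X)$ from above with a constant depending only on $n$, and tracking how the $\ell_\infty$-normalization of $Y$ (rather than $\ell_2$) only costs a dimensional constant. There is also a small subtlety to acknowledge: Theorem~\ref{Ygod} is stated for the fixed index choice "last row vs.\ first $n-1$ rows", and one should note that this is exactly what we need — we do not need to symmetrize over which row is closest to the span of the others, because an upper bound on $\sigma_\text{min}$ from \emph{one} such configuration already suffices. Everything else is a direct citation of the moulds toolkit developed in Sections~\ref{moulds} and~\ref{nuple}.
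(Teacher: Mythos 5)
Your proof is correct and takes essentially the same route as the paper's: the key inequality $\sigma_\text{min}(\widetilde X)\le|X_n\cdot Y|$ obtained from $\|Y\|_2\ge\|Y\|_\infty=1$ (in fact no dimensional constant $c_n'$ is needed), followed by Theorem~\ref{Ygod} and Theorem~\ref{CalP}, with the degenerate case dismissed as trivial. Your side remark about $n=1$ is a fair catch --- the theorem implicitly assumes $n\ge2$, as Assumption~\ref{assumption} requires.
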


\begin{proof}
If the random matrix $\widetilde{X}$ is singular with positive probability the thesis is trivial. Otherwise its rows $X_1,\ldots,X_n$ satisfy assumption \ref{assumption} and we can consider the random vector $Y$ of theorem \ref{Ygod}. Then it is enough to observe that, since $\|Y\|_\infty=1$ and so $\|Y\|_2\ge 1$,
$$
\Big(\sigma_\text{min}(\widetilde X)<\epsilon\Big)=\Big(\min_{\|y\|_2=1}\|\widetilde X\,y\|_2<\epsilon\Big)
\supseteq \left(\frac{\|\widetilde X\,Y\|_2}{\|Y\|_2}<\epsilon\right)
\supseteq \Big(\|\widetilde X\,Y\|_2<\epsilon\Big)
=\Big(|X_n\cdot Y|<\epsilon\Big),
$$
to deduce
$$
\liminf_{\epsilon\to0^+}\frac{\Prob\Big(\sigma_\text{min}(\widetilde X)<\epsilon\Big)}{\epsilon}\geq\liminf_{\epsilon\to0^+}\frac{\Prob\Big(|X_n\cdot Y|<\epsilon\Big)}{\epsilon}>0.
$$
The full thesis then follows thanks to theorem \ref{CalP}.
\end{proof}

Since the least singular value is invariant under transposition, the theorem holds for matrices with i.i.d.\ columns, too.

\subsection{Additional results for $\sigma_\text{min}(\widetilde X)$ for some well known ensembles}
After finding a lower bound of $k\epsilon$ for the probability that the least singular value $\sigma_\text{min}$ of a square random matrix with generic i.i.d.\ rows is smaller than $\epsilon$, it is natural to ask if this estimation can be improved for particular random matrix ensembles.

%%%%%%%%%%%%%%%%%%%%%%%%%%%%%%%%%%%%%%%%%%%%%%%%%%%%
Of course, if $\widetilde X$ is a random matrix with i.i.d.\ discrete rows, $\Prob(\sigma_\text{min}(\widetilde X)=0)>0$ so the previous result \ref{lsv} becomes trivial in this case.
%%%%%%%%%%%%%%%%%%%%%%%%%%%%%%%%%%%%%%%%%%%%%%%%%%%%

However, there are lots of ensembles where the previous lower bound can be associated to proper upper bounds which together determine the behaviour of the cumulative distribution of $\sigma_\text{min}$ in the neighbourhood of $0$.

\subsubsection{Matrices of i.i.d.\ rows with isotropic log-concave distribution}\label{LogC}

A random vector has a log-concave distribution if for every $\lambda\in (0,1)$, said $f(x)$ its density function, we have
$$f(\lambda x+(1-\lambda)y)\le f(x)^\lambda f(y)^{1-\lambda}.$$
A random vector is said to be isotropic if it has mean value zero.

In \cite{Adam} Adamczak et al.\ show (corollary 2.14)
%\footnote{actually in that work the results were proved for matrices with i.i.d. columns instead of rows, but the singular values are invariant for transposition}
that if $\widetilde X$ is a square random matrix of dimension $n$ with i.i.d.\ rows drawn from an isotropic log-concave distribution,
$$\forall \epsilon \in(0,1),\ \forall \delta>0,\ \exists C_\delta \quad:\quad \Prob\Big(\sigma_\text{min}(\widetilde X)<n^{-1/2}\epsilon\Big)<\epsilon^{1-\delta}C_\delta.$$

If the matrix is larger than a fixed dimension $n_0$, we can even choose $\delta=0$ in the previous estimation, as it was proved by Tikhomirov in \cite{Tik} (corollary 1.4), obtaining
$$\Prob\Big(\sigma_\text{min}(\widetilde X)<n^{-1/2}t\Big)<Ct,\qquad\forall t>0.$$
The dimension $n_0$ is universal, in the sense that it is independent of the isotropic log-concave distribution, as well as $C$ is a universal constant independent both of the isotropic log-concave distribution and of the dimension $n>n_0$.
Summing up our result and the ones of \cite{Adam} and \cite{Tik} we get the following corollary.

\begin{cor}\label{logC}
Let $\widetilde X$ be a random matrix with i.i.d.\ rows drawn from an isotropic log-concave distribution. Then, for every $\delta>0$ there exist $0<k_1<k_2$ such that
$$k_1 \epsilon<\Prob\Big(\sigma_\text{min}(\widetilde X)<\epsilon\Big)<k_2 \epsilon^{1-\delta}$$
(where $k_2=C_\delta \sqrt{n}$ and $C_\delta$ only depends on $\delta$)
holds for positive $\epsilon$ sufficiently small. Moreover, there exists a universal constant $n_0$ such that, if the size of $\widetilde X$ is greater than $n_0$, then
$$k_1 \epsilon<\Prob\Big(\sigma_\text{min}(\widetilde X)<\epsilon\Big)<k_2 \epsilon$$
(where $k_2=C\sqrt{n}$ and $C$ is a universal constant) holds for positive $\epsilon$ sufficiently small.
\end{cor}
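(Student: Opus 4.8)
The proof is a matter of splicing together the general lower bound of Theorem~\ref{lsv} with the two upper bounds quoted just above, after a trivial rescaling of the variable. The plan is as follows.

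First I would note that a log-concave probability measure which is not concentrated on an affine hyperplane admits a density, so the rows of $\widetilde X$ are a.s.\ such that any $n-1$ of them are linearly independent and $\widetilde X$ is a.s.\ invertible; hence Theorem~\ref{lsv} applies and gives $0\in\C_1(\sigma_\text{min}(\widetilde X))$, that is $\liminf_{\epsilon\to0^+}\Prob(\sigma_\text{min}(\widetilde X)<\epsilon)/\epsilon>0$. Unwinding this $\liminf$ exactly as in the proof of Lemma~\ref{attesa}, I obtain a constant $a>0$ and a threshold $\epsilon_1>0$ with $\Prob(\sigma_\text{min}(\widetilde X)<\epsilon)>a\,\epsilon$ for every $0<\epsilon<\epsilon_1$.

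Next I would rephrase the two quoted upper bounds directly in terms of $\sigma_\text{min}(\widetilde X)$. Substituting $t=n^{-1/2}\epsilon$ in the estimate of \cite{Adam} and using $n^{(1-\delta)/2}\le\sqrt n$ (true for $0<\delta<1$, and harmless otherwise since the bound is then vacuous), one gets for every $\delta>0$ a constant $C_\delta$ with $\Prob(\sigma_\text{min}(\widetilde X)<t)<C_\delta\sqrt n\,t^{1-\delta}$ for all $t\in(0,n^{-1/2})$; the same substitution in the estimate of \cite{Tik}, valid when $n>n_0$, yields $\Prob(\sigma_\text{min}(\widetilde X)<t)<C\sqrt n\,t$ for all $t>0$.

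Finally I would set $k_2=C_\delta\sqrt n$ (respectively $k_2=C\sqrt n$ in the $\delta=0$ case), put $k_1=\min\{a,k_2/2\}$ so that automatically $0<k_1<k_2$, and observe that for every $\epsilon$ below $\min\{\epsilon_1,n^{-1/2}\}$ both $\Prob(\sigma_\text{min}(\widetilde X)<\epsilon)>k_1\epsilon$ and $\Prob(\sigma_\text{min}(\widetilde X)<\epsilon)<k_2\epsilon^{1-\delta}$ hold simultaneously, which is precisely the claimed two-sided estimate; the $\delta=0$ version for $n>n_0$ follows identically from the \cite{Tik} bound. I do not expect any genuine obstacle here: the argument is essentially bookkeeping of the rescaling constants and the thresholds, the only step deserving a line of justification being the a.s.\ invertibility of $\widetilde X$ that licenses the application of Theorem~\ref{lsv}.
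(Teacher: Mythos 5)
Your proposal is correct and follows exactly the route the paper intends: the corollary is stated there as an immediate combination of the lower bound from Theorem~\ref{lsv} (unwound into $\Prob(\sigma_\text{min}(\widetilde X)<\epsilon)>a\epsilon$ for small $\epsilon$) with the rescaled upper bounds of \cite{Adam} and \cite{Tik}, and no separate proof is given. Your bookkeeping of the substitution $t=n^{-1/2}\epsilon$ and of the constants $k_1<k_2$ is sound (note only that the lower-bound half of Theorem~\ref{lsv} needs no invertibility at all, so that preliminary remark is dispensable).
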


\subsubsection{Matrices of i.i.d.\ $L^2$ contiuous entries (large n)}

Tikhomirov in \cite{Tik} proved (corollary 1.3) that for any $L>0$ there is $v(L)>0$ and $n_0\in\N$ such that for all matrices $\widetilde X$ of dimension $n>n_0$ of i.i.d.\ continuous entries $X_{ij}$ with density $f$ such that
$$\E[X_{ij}]=0,\qquad \E[X_{ij}^2]=1,\qquad \sup_{x\in \R} f(x)<L$$
we have
$$\Prob\Big(\sigma_\text{min}(\widetilde X)<n^{-1/2}t\Big)<v(L)\,t, \qquad\forall t>0.$$

Summing up with \ref{lsv}, we have that even in this case the probability of the least singular value of being small is a first order infinitesimal in the case when the matrix is big enough.
\begin{cor}
Let $\widetilde X$ be an $n \times n$ ($n>n_0$ universal constant) random matrix with i.i.d.\ continuous entries of mean zero and unit variance whose density function is bounded. Then there exist $0<v_1<v_2$ such that
$$v_1 \epsilon<\Prob\Big(\sigma_\text{min}(\widetilde X)<\epsilon\Big)<v_2 \epsilon$$
holds for positive $\epsilon$ sufficiently small.
\end{cor}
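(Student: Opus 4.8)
The plan is to sandwich $\Prob\big(\sigma_\text{min}(\widetilde X)<\epsilon\big)$ between a linear lower bound supplied by Theorem \ref{lsv} and a linear upper bound supplied by Tikhomirov's corollary 1.3 of \cite{Tik}, quoted just above the statement. Since the two inequalities are already essentially available, the work consists only in checking that the hypotheses of Theorem \ref{lsv} are met and in matching the normalizations.

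For the lower bound I would argue as follows. The entries $X_{ij}$ are i.i.d.\ and absolutely continuous, so the rows $X_1,\dots,X_n$ are i.i.d.\ random vectors with an absolutely continuous law on $\R^n$; in particular any $n-1$ of them are linearly independent a.s.\ and $\widetilde X$ is invertible a.s.. Hence assumption \ref{assumption} holds and Theorem \ref{lsv} applies, giving $0\in\C_1\big(\sigma_\text{min}(\widetilde X)\big)$, that is
$$\liminf_{\epsilon\to0^+}\frac{\Prob\big(\sigma_\text{min}(\widetilde X)<\epsilon\big)}{\epsilon}=:q>0.$$
Setting $v_1=q/2$, the definition of $\liminf$ yields an $\epsilon_1>0$ with $\Prob\big(\sigma_\text{min}(\widetilde X)<\epsilon\big)>v_1\epsilon$ for all $0<\epsilon<\epsilon_1$.

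For the upper bound, note that the present hypotheses ($X_{ij}$ continuous, $\E[X_{ij}]=0$, $\E[X_{ij}^2]=1$, density bounded) are precisely those of corollary 1.3 of \cite{Tik} once we fix any $L>\sup_{x}f(x)$. That result provides $v(L)>0$ and a universal $n_0$ such that, for $n>n_0$,
$$\Prob\big(\sigma_\text{min}(\widetilde X)<n^{-1/2}t\big)<v(L)\,t,\qquad\forall t>0.$$
The change of variable $t=\sqrt n\,\epsilon$ then gives $\Prob\big(\sigma_\text{min}(\widetilde X)<\epsilon\big)<v_2\epsilon$ for every $\epsilon>0$, with $v_2=v(L)\sqrt n$. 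Combining the two estimates for $0<\epsilon<\epsilon_1$ yields $v_1\epsilon<\Prob\big(\sigma_\text{min}(\widetilde X)<\epsilon\big)<v_2\epsilon$, which in turn forces $v_1<v_2$, completing the proof.

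I do not expect a genuine obstacle here: the statement is a straightforward synthesis of Theorem \ref{lsv} with an external upper bound. The only points requiring care are verifying that Theorem \ref{lsv} is applicable (i.i.d.\ rows and a.s.\ invertibility, both immediate from absolute continuity of the entries) and correctly rescaling Tikhomirov's bound via $t=\sqrt n\,\epsilon$. It is worth remarking, as in Corollary \ref{logC}, that $v_1$ depends on the specific distribution of the entries whereas $v_2$ depends only on $L$ and on $n$.
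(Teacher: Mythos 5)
Your proposal is correct and coincides with the paper's intended argument: the corollary is exactly the combination of the lower bound from Theorem \ref{lsv} (applicable since absolutely continuous i.i.d.\ entries give i.i.d.\ rows and a.s.\ invertibility) with Tikhomirov's upper bound rescaled by $t=\sqrt n\,\epsilon$. No gaps.
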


\section{Condition number $\kappa(\widetilde X)$}

Last but not least the condition number.

\subsection{The main result for $\kappa(\widetilde X)$}

\begin{thm}\label{condT}
Let $\widetilde X$ be a square random matrix with i.i.d.\ rows. Then, for every choice of the matrix norm,
$$\E\Big[\kappa(\widetilde X)\Big]=+\infty.$$
\end{thm}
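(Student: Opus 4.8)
The plan is to reduce at once to the case in which $\widetilde X$ is invertible almost surely: if $\Prob\big(\det\widetilde X=0\big)>0$ then $\kappa(\widetilde X)=+\infty$ on an event of positive probability and the claim is immediate. So assume $\widetilde X$ is invertible a.s.; then its rows $X_1,\dots,X_n$ are i.i.d., nonzero a.s., and satisfy Assumption~\ref{assumption}, and we may form the vector $Y=\wedge(X_1,\dots,X_{n-1})/\|\wedge(X_1,\dots,X_{n-1})\|_\infty$ of Theorem~\ref{Ygod}, which is orthogonal to $X_1,\dots,X_{n-1}$, independent of $X_n$, and satisfies $\|Y\|_2\geq\|Y\|_\infty=1$.

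Since all matrix norms on $\R^{n\times n}$ are pairwise equivalent, there are constants $a,b>0$ (depending only on the chosen norm and on $n$) with $\|A\|\geq a\|A\|_2$ and $\|A^{-1}\|\geq b\|A^{-1}\|_2$ for every invertible $A$, hence $\kappa(\widetilde X)\geq ab\,\sigma_\text{max}(\widetilde X)/\sigma_\text{min}(\widetilde X)$. Now I would use two elementary estimates: testing the definition of $\sigma_\text{max}$ against the unit vector $X_1/\|X_1\|_2$ gives $\sigma_\text{max}(\widetilde X)\geq\|X_1\|_2$, while, exactly as in the proof of Theorem~\ref{lsv}, testing $\sigma_\text{min}$ against $Y/\|Y\|_2$ and using that $Y$ is orthogonal to $X_1,\dots,X_{n-1}$ gives $\sigma_\text{min}(\widetilde X)\leq\|\widetilde X\,Y\|_2=|X_n\cdot Y|$. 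Therefore
$$\kappa(\widetilde X)\geq ab\,\frac{\|X_1\|_2}{|X_n\cdot Y|}\qquad\text{a.s.}$$
The virtue of this particular bound is that $\|X_1\|_2$ and $Y$ are both measurable functions of $X_1,\dots,X_{n-1}$, whereas $X_n$ is independent of these; conditioning on $X_1,\dots,X_{n-1}$ and applying the freezing lemma therefore yields
$$\E\big[\kappa(\widetilde X)\big]\geq ab\,\E\Big[\|X_1\|_2\,h(Y)\Big],\qquad h(y):=\E\Big[\,|X_n\cdot y|^{-1}\,\Big].$$

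It then remains to prove that $h(Y)=+\infty$ almost surely; since $\|X_1\|_2>0$ a.s.\ (the rows of an a.s.\ invertible matrix are nonzero), this makes the right-hand side $+\infty$ and finishes the proof. By Theorem~\ref{CalP}, $h(y)=+\infty$ whenever $0\in\C_1(X_n\cdot y)$. If the rows are bounded, the implication $y\in\C_{n-1}(Y)\Rightarrow 0\in\C_1(X_n\cdot y)$ is exactly Proposition~\ref{Ygodbd}(1), so $h(Y)=+\infty$ a.s.\ follows from $Y\in\C_{n-1}(Y)$ a.s.\ (Proposition~\ref{Ycal}). For general rows I would remove boundedness as in the proof of Theorem~\ref{Ygod}: pick $\rho$ with $\Prob(E_\rho)>0$, where $E_\rho=\bigcap_{i=1}^n(\|X_i\|<\rho)$; under $\Prob_\rho=\Prob(\cdot\mid E_\rho)$ the rows are bounded, i.i.d.\ and still satisfy Assumption~\ref{assumption}, so $h_\rho(Y)=+\infty$ holds $\Prob_\rho$-a.s.\ with $h_\rho(y)=\E_\rho[|X_n\cdot y|^{-1}]$; since the law of $X_n$ under $\Prob_\rho$ is its $\Prob$-law conditioned on $\{\|X_n\|<\rho\}$, one has $h(y)\geq\Prob(\|X_n\|<\rho)\,h_\rho(y)$, whence $\Prob\big(h(Y)=+\infty\big)\geq\Prob(E_\rho)$. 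Letting $\rho\to\infty$, so that $\Prob(E_\rho)\uparrow 1$, gives $h(Y)=+\infty$ a.s.

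I expect the last step to be the only genuinely delicate point. Theorem~\ref{Ygod} supplies only the integrated statement $\E[\,|X_n\cdot Y|^{-1}\,]=\E[h(Y)]=+\infty$, whereas here I need the pointwise-a.s.\ statement $h(Y)=+\infty$; this is why the argument has to re-enter the machinery of Section~\ref{nuple} (Propositions~\ref{Ycal} and~\ref{Ygodbd}, together with the conditioning on $E_\rho$) rather than quote Theorem~\ref{Ygod} as a black box. Everything else — the norm equivalence, the two singular-value estimates, and the conditioning/freezing step — is routine, and the result for matrices with i.i.d.\ columns follows by transposition since $\kappa(\widetilde X)=\kappa(\widetilde X^{\mathsf T})$.
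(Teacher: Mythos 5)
Your proof is correct, but it takes a genuinely different route from the paper's. The paper makes the same initial reduction to the a.s.\ invertible case, but then works with the $\infty$-norm and conditions on the event $E_\rho=\bigcap_i\big(\|X_i\|_1>\rho\big)$, i.e.\ it bounds the rows \emph{from below}: under $\Prob_\rho$ one has $\|\widetilde X\|_\infty>\rho$ deterministically, so $\kappa_\infty(\widetilde X)\geq\rho\,\|\widetilde X^{-1}\|_\infty$ and the conclusion follows immediately from $\E\big[\|\widetilde X^{-1}\|\big]=+\infty$ (Theorem~\ref{lsv}), applied under $\Prob_\rho$ and transferred back via $\E[\kappa]\geq\E_\rho[\kappa]\Prob(E_\rho)$. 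You instead keep $\|\widetilde X\|$ random through the pointwise bound $\kappa(\widetilde X)\geq ab\,\|X_1\|_2/|X_n\cdot Y|$, and consequently need the pointwise-in-$Y$ statement that $h(y)=\E[|X_n\cdot y|^{-1}]=+\infty$ for a.e.\ realization $y$ of $Y$. You are right that this is strictly more than Theorem~\ref{Ygod} asserts and must be re-extracted from Propositions~\ref{Ycal} and~\ref{Ygodbd} together with a truncation; your derivation (including $h(y)\geq\Prob(\|X_n\|<\rho)\,h_\rho(y)$ and letting $\rho\to\infty$) is sound, as are the two singular-value estimates and the freezing step. The paper's route is shorter because the deterministic lower bound on the operator norm lets it invoke Theorem~\ref{lsv} as a black box; yours yields the slightly stronger by-product that $\E\big[\,|X_n\cdot Y|^{-1}\,\big|\,X_1,\ldots,X_{n-1}\big]=+\infty$ almost surely, which is not needed for the statement but is a correct and somewhat finer piece of information.
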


\begin{proof}
If the random matrix $\widetilde{X}$ is singular with positive probability the thesis is trivial. Otherwise, when $\widetilde X$ is invertible a.s., it is enough to prove the theorem for the operator norm induced by the norm infinity of $\R^n$, as condition numbers are pairwise equivalent for a change of the matrix norm.

We prove the theorem in two steps, first for rows $X_1,\ldots,X_n$ bounded from below, then for the general case of $\widetilde X$ invertible a.s..
\begin{enumerate}
\item If $\|X_1\|_1>\rho$ a.s.\ for some $\rho>0$, then the thesis immediately follows. Indeed, such a condition gives
$$
\|\widetilde X\|_\infty=\max_i \|X_i\|_1>\rho \text{ a.s.}$$
and so, by theorem \ref{lsv},
$$
\E\Big[\kappa_\infty(\widetilde X)\Big]=\E\Big[\|\widetilde X\|_\infty\,\|\widetilde X^{-1}\|_\infty \Big] > \rho\,\E\Big[\|\widetilde{X}^{-1}\|\Big]=+\infty.$$
\item If $\widetilde X$ is invertible a.s., then $\Prob\Big(\|X_i\|>0\Big)=1$ and, by monotonicity, there exists $\rho>0$ such that $\Prob\Big(\|X_1\|_1>\rho\Big)>0$. Thus, the event
$$
E_\rho=\bigcap_{i=1}^n\Big(\|X_i\|_1>\rho\Big)
$$
has positive probability and we can consider the conditional probability
$$
\Prob_\rho(\cdot)=\Prob(\cdot|E_\rho).
$$
As $\Prob(A)\geq\Prob_\rho(A)\,\Prob(E_\rho)$ for every event $A$, we also have $\E[W]\geq\E_\rho[W]\,\Prob(E_\rho)$ for every random variable $W\geq0$.
Thus $\E\Big[\kappa(\widetilde X)\Big]\geq\E_\rho\Big[\kappa(\widetilde X)\Big]\,\Prob(E_\rho)=+\infty$ by step 1, as the random vectors $X_1,\ldots,X_n$ are bounded from below under $\Prob_\rho$ and it is a straightforward verification that they are also $\Prob_\rho$-i.i.d. and satisy assumption \ref{assumption}.
\end{enumerate}
\end{proof}

This theorem is a generalization of \cite{Kos}, theorem 5.2, in which it was shown that the average condition number for a random matrix with i.i.d. gaussian entries was infinite.

Since the condition number in euclidean norm is invariant under transposition, the previous theorem holds for matrices with i.i.d.\ columns, too.

\subsection{Additional result for $\kappa(\widetilde X)$ in the isotropic log-concave case}

Again, in \cite{Adam} Adamczak et al.\ proved an upper bound for the condition number (corollary 2.15) in the isotropic log-concave case: for every square random matrix $\widetilde X$ with $n$ columns (or rows) i.i.d.\ with isotropic log-concave distribution and for every $\delta>0$, there exists $C_\delta$ such that
$$\Prob\Big(\kappa(\widetilde X)>nt\Big)\le \frac{C_\delta}{t^{1-\delta}},\qquad\forall t>0.$$

This result, which bounds the probability that the condition number is high, can be merged with theorem \ref{condT} to prove the following corollary.

The corollary shows that, under the isotropic log-concave hypothesis, $\alpha=1$ is the least number such that $\E[\kappa(\widetilde X)^\alpha]=+\infty$.

\begin{cor}
Let $\widetilde X$ be a square random matrix with i.i.d.\ rows (or columns) with isotropic log-concave distribution.
Then
$$\E\Big[\kappa(\widetilde X)^\alpha\Big]<+\infty\iff \alpha<1.$$
\label{last}
\end{cor}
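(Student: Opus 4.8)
The plan is to establish the two implications separately, using Theorem \ref{condT} for the forward-infinite direction and the Adamczak et al.\ tail bound for the convergent direction.

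\medskip

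\textbf{The direction $\alpha\ge1\implies\E[\kappa(\widetilde X)^\alpha]=+\infty$.} By Theorem \ref{condT} we already know $\E[\kappa(\widetilde X)]=+\infty$. Since $\kappa(\widetilde X)\ge1$ almost surely (the condition number is always at least $1$ for any submultiplicative matrix norm, as $1=\|\id\|=\|\widetilde X\widetilde X^{-1}\|\le\|\widetilde X\|\|\widetilde X^{-1}\|$; if one works with a norm for which this fails one first passes to the operator norm, where it holds, using pairwise equivalence exactly as in the proof of Theorem \ref{condT}), we have $\kappa(\widetilde X)^\alpha\ge\kappa(\widetilde X)$ for every $\alpha\ge1$, hence $\E[\kappa(\widetilde X)^\alpha]\ge\E[\kappa(\widetilde X)]=+\infty$. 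This also covers the trivial case in which $\widetilde X$ is singular with positive probability, where $\kappa(\widetilde X)=+\infty$ on a set of positive measure and every positive moment is infinite.

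\medskip

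\textbf{The direction $\alpha<1\implies\E[\kappa(\widetilde X)^\alpha]<+\infty$.} Here I would use the layer-cake formula
$$\E\Big[\kappa(\widetilde X)^\alpha\Big]=\int_0^\infty \Prob\Big(\kappa(\widetilde X)^\alpha>s\Big)\,ds=\int_0^\infty \Prob\Big(\kappa(\widetilde X)>s^{1/\alpha}\Big)\,ds.$$
The integrand is bounded by $1$ on a bounded initial interval, so convergence is a question about the tail. Fix $\delta>0$ with $\alpha<1-\delta$ (possible since $\alpha<1$); the Adamczak--et al.\ bound gives $\Prob(\kappa(\widetilde X)>nt)\le C_\delta t^{-(1-\delta)}$ for all $t>0$, i.e.\ $\Prob(\kappa(\widetilde X)>u)\le C_\delta n^{1-\delta}u^{-(1-\delta)}$ for all $u>0$. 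Substituting $u=s^{1/\alpha}$, the tail of the integral is controlled by $\int^\infty C_\delta n^{1-\delta}s^{-(1-\delta)/\alpha}\,ds$, which converges precisely because $(1-\delta)/\alpha>1$. Splitting the integral at, say, $s=n^\alpha$ (below which we bound the integrand by $1$, above which we use the tail estimate) then yields a finite value for $\E[\kappa(\widetilde X)^\alpha]$.

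\medskip

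I do not expect a serious obstacle: both halves are short once one has Theorem \ref{condT} and the cited tail bound. The one point requiring a little care is the bookkeeping with $\delta$ in the convergent direction — one must choose $\delta$ after $\alpha$ (not uniformly) so that the exponent $(1-\delta)/\alpha$ stays strictly above $1$ — and, as in the proof of Theorem \ref{condT}, the observation that it suffices to prove everything for the operator norm induced by $\|\cdot\|_\infty$ and then invoke the pairwise equivalence of condition numbers to transfer the conclusion to an arbitrary matrix norm (equivalence changes $\kappa$ by bounded multiplicative factors, which affects neither finiteness nor infiniteness of $\E[\kappa^\alpha]$).
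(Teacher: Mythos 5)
Your proposal is correct and follows essentially the same route as the paper: the $\alpha\ge1$ direction reduces to Theorem \ref{condT}, and the $\alpha<1$ direction uses the Adamczak et al.\ tail bound, a choice of $\delta$ with $(1-\delta)/\alpha>1$, and the layer-cake formula with the integral split into a bounded part and a convergent tail. If anything, you are slightly more careful than the paper in spelling out why $\alpha>1$ follows from $\alpha=1$ (via $\kappa\ge1$) and in tracking the dimension factor $n$ in the tail bound.
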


\begin{proof}
Our theorem \ref{condT} proves that
$$\alpha \ge 1\implies \E\Big[\kappa(\widetilde X)^\alpha\Big]=+\infty.$$
So it is enough to show that
$$\alpha < 1\implies \E\Big[\kappa(\widetilde X)^\alpha\Big]<+\infty.$$
By the above mentiond result we have
$$\forall \delta>0, \ \exists C_\delta>0 \quad:\quad \Prob\Big(\kappa(\widetilde X)>t\Big)\le \frac{C_\delta}{t^{1-\delta}},\quad\forall t>0,$$
and so it follows that, for all $t>0$ and for all $\alpha \in (0,1)$,
$$\Prob\Big(\kappa(\widetilde X)^\alpha>t\Big)=\Prob\Big(\kappa(\widetilde X)>t^{1/\alpha}\Big)\le \frac{C_\delta}{t^{(1-\delta)/\alpha}}.$$
Now, since $\alpha<1$, we can choose $\delta$ positive such that $$\eta=(1-\delta)/\alpha>1.$$
This means that there exist $\eta>1$ and $C_\delta>0$ such that
$$\Prob\Big(\kappa(\widetilde X)>t\Big)\le \frac{C_\delta}{t^\eta},\quad\forall t>0.$$
Then
$$\E\Big[\kappa(\widetilde X)\Big]=\int_0^\infty \Prob\Big(\kappa(X)>t\Big)\ dt \le \int _0^1 1\ dt+\int_1^\infty \frac{C_\delta}{t^\eta}\ dt=1+\frac{C_\delta}{\eta-1},$$
which is less than infinity since $\eta>1$.
\end{proof}

\section{Final remarks}

This last result shows again that, for random matrices with i.i.d.\ isotropic log-concave rows, our lower bound estimations of the least singular value and of the condition number are complementary to the upper bounds known from the literature: \cite{Adam} and \cite{Tik} give
$$\exists k_\delta,k>0\quad:\quad\Prob\Big(\sigma_\text{min}(\widetilde X)<\epsilon\Big)<
\begin{cases}k_\delta\,\epsilon^{1-\delta},&\forall \delta>0,\;\forall 0<\epsilon<1,\\
k\epsilon,&\forall\epsilon>0, \text{ if the size of } \widetilde X \text{ is large enough,} \end{cases}$$
$$\E\Big[\kappa(\widetilde X)^\alpha\Big]<\infty,\quad\forall \alpha<1,$$
while we proved (corollaries \ref{logC} and \ref{last}) that
$$\exists k_1,\epsilon_0>0\quad:\quad k_1\epsilon<\Prob\Big(\sigma_\text{min}(\widetilde X)<\epsilon\Big)\quad\forall 0<\epsilon<\epsilon_0,$$
$$\alpha<1\iff \E\Big[\kappa(\widetilde X)^\alpha\Big]<\infty.$$
This means that, for every random matrix with i.i.d.\ rows, even if they do not admit a density function or their moments are unbounded, the probability of the least singular value of laying in the interval $[0,\epsilon)$ is at least of the order of $\epsilon$, and in some special cases such as the log-concave ensembles it is exactly of that order. Fortunately, for these distributions we can even bound the previous probability with constants that depends only on the dimension of the matrix and on some universal constants.

Similarly, taking a random matrix where the rows are i.i.d., then inevitably $$\kappa(\widetilde X)\notin L^1.$$
However, choosing the previous particular ensembles we can have a slightly weaker integrability,
$$\kappa(\widetilde X)^\alpha \in L^1, \qquad \forall \alpha<1.$$
As this one is the best achievable integrability, it is shown that the isotropic log-concave distributions are among the "nicest" ones in terms of the well-conditioning of a matrix with those rows.

\end{document}